\newtheorem*{ThmA}{Theorem A}
\newtheorem*{ConB}{Conjecture B}
\newtheorem*{ConC}{Conjecture C}
\newtheorem*{ThmD}{Theorem D}
\newtheorem*{ThmE}{Theorem E}
\newtheorem{Thm}{Theorem} %[section]
\newtheorem{Lem}[Thm]{Lemma}
\newtheorem{Prop}[Thm]{Proposition}
\newtheorem{Cor}[Thm]{Corollary}
\newtheorem{Con}[Thm]{Conjecture}
\theoremstyle{definition}
\newtheorem{Def}[Thm]{Definition}
\newtheorem{Ex}[Thm]{Example}
\renewcommand{\phi}{\varphi}
\newcommand{\C}{\mathrm{C}}
\newcommand{\N}{\mathrm{N}}
\newcommand{\Z}{\mathrm{Z}}
\newcommand{\pcore}{\mathrm{O}}
\newcommand{\ZZ}{\mathbb{Z}}
\newcommand{\RR}{\mathbb{R}}
\newcommand{\Aut}{\mathrm{Aut}}
\newcommand{\Irr}{\mathrm{Irr}}
\newcommand{\IBr}{\mathrm{IBr}}
\newcommand{\Ker}{\operatorname{Ker}}
\newcommand{\Cl}{\operatorname{Cl}}
\newcommand{\sgn}{\operatorname{sgn}}
\newcommand{\diag}{\operatorname{diag}}
\newcommand{\TT}{\mathrm{t}}
\title{Real characters in nilpotent blocks}
\author{Benjamin Sambale\footnote{Institut für Algebra, Zahlentheorie und Diskrete Mathematik, Leibniz Universität Hannover, Welfengarten 1, 30167 Hannover, Germany,
\href{mailto:sambale@math.uni-hannover.de}{sambale@math.uni-hannover.de}}}
\date{\today}
\begin{document}
\frenchspacing
\maketitle
\begin{abstract}\noindent
We prove that the number of irreducible real characters in a nilpotent block of a finite group is locally determined. We further conjecture that the Frobenius--Schur indicators of those characters can be computed for $p=2$ in terms of the extended defect group. 
We derive this from a more general conjecture on the Frobenius--Schur indicator of projective indecomposable characters of $2$-blocks with one simple module. This extends results of Murray on $2$-blocks with cyclic and dihedral defect groups.
\end{abstract}

\textbf{Keywords:} real characters; Frobenius--Schur indicators; nilpotent blocks\\
\textbf{AMS classification:} 20C15, 20C20

\renewcommand{\sectionautorefname}{Section}
\section{Introduction}

An important task in representation theory is to determine global invariants of a finite group $G$ by means of local subgroups. 
Dade's conjecture, for instance, predicts the number of irreducible characters $\chi\in\Irr(G)$ such that the $p$-part $\chi(1)_p$ is a given power of a prime $p$  (see \cite[Conjecture~9.25]{Navarro2}). Since Gow's work~\cite{GowFirst}, there has been an increasing interest in counting real (i.\,e. real-valued) characters and more generally characters with a given field of values. 

The quaternion group $Q_8$ testifies that a real irreducible character $\chi$ is not always afforded by a representation over the real numbers. The precise behavior is encoded by the \emph{Frobenius--Schur indicator} (F-S indicator, for short)
\begin{equation}\label{FS}
\epsilon(\chi):=\frac{1}{|G|}\sum_{g\in G}\chi(g^2)=\begin{cases}
0&\text{if }\overline{\chi}\ne\chi,\\
1&\text{if $\chi$ is realized by a real representation},\\
-1&\text{if $\chi$ is real, but not realized by a real representation}.
\end{cases}
\end{equation}
A new interpretation of the F-S indicator in terms of superalgebras has been given recently in \cite{Ichikawa}.
The case of the dihedral group $D_8$ shows that $\epsilon(\chi)$ is not determined by the character table of $G$. The computation of F-S indicators can be a surprisingly difficult task, which has not been fully completed for the simple groups of Lie type, for instance (see \cite{TrefethenVinroot}). Problem~14 on Brauer's famous list~\cite{BrauerLectures} asks for a group-theoretical interpretation of the number of $\chi\in\Irr(G)$ with $\epsilon(\chi)=1$. 

To obtain deeper insights, we fix a prime $p$ and assume that $\chi$ lies in a $p$-block $B$ of $G$ with defect group $D$. By complex conjugation we obtain another block $\overline{B}$ of $G$. If $\overline{B}\ne B$, then clearly $\epsilon(\chi)=0$ for all $\chi\in\Irr(B)$. Hence, we assume that $B$ is real, i.\,e. $\overline{B}=B$. 
John Murray~\cite{MurrayCyclic2,MurraySubpairs} has computed the F-S indicators when $D$ is a cyclic $2$-group or a dihedral $2$-group (including the Klein four-group). His results depend on the fusion system of $B$, on Erdmann's classification of tame blocks and on the structure of the so-called \emph{extended defect group} $E$ of $B$ (see \autoref{defdefectpair} below). 
For $p>2$ and $D$ cyclic, he obtained in \cite{MurrayCyclicOdd} partial information on the F-S indicators in terms of the Brauer tree of $B$.

The starting point of my investigation is the well-known fact that $2$-blocks with cyclic defect groups are nilpotent. Assume that $B$ is nilpotent and real. If $B$ is the principal block, then $G=\pcore_{p'}(G)D$ and $\Irr(B)=\Irr(G/\pcore_{p'}(G))=\Irr(D)$. In this case the F-S indicators of $B$ are determined by $D$ alone. Thus, suppose that $B$ is non-principal.
By Broué--Puig~\cite{BrouePuig}, there exists a height-preserving bijection $\Irr(D)\to\Irr(B)$, $\lambda\mapsto\lambda*\chi_0$ where $\chi_0\in\Irr(B)$ is a fixed character of height $0$ (see also \cite[Definition 8.10.2]{LinckelmannBook2}). However, this bijection does not in general preserve F-S indicators. For instance, the dihedral group $D_{24}$ has a nilpotent $2$-block with defect group $C_4$ and a nilpotent $3$-block with defect group $C_3$, although every character of $D_{24}$ is real. 
Our main theorem asserts that the number of real characters in a nilpotent block is nevertheless locally determined.
To state it, we introduce the \emph{extended inertial group} 
\[\N_G(D,b_D)^*:=\bigl\{g\in\N_G(D):b_D^g\in\{b_D,\overline{b_D}\}\bigr\}\]
where $b_D$ is a Brauer correspondent of $B$ in $D\C_G(D)$.  

\begin{ThmA}
Let $B$ be a real, nilpotent $p$-block of a finite group $G$ with defect group $D$. Let $b_D$ be a Brauer correspondent of $B$ in $D\C_G(D)$. Then the number of real characters in $\Irr(B)$ of height $h$ coincides with the number of characters $\lambda\in\Irr(D)$ of degree $p^h$ such that $\lambda^t=\overline{\lambda}$ where 
\[\N_G(D,b_D)^*/D\C_G(D)=\langle tD\C_G(D)\rangle.\]
If $p>2$, then all real characters in $\Irr(B)$ have the same F-S indicator.
\end{ThmA}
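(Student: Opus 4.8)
The plan is to derive both assertions from Puig's structure theorem for nilpotent blocks. Over a large enough $\mathcal{O}$ that theorem supplies an isomorphism of $\mathcal{O}$-algebras $B\cong\mathrm{Mat}_m(\mathcal{O}D)$ arising from a source-algebra equivalence, the source algebra of $B$ being $\mathcal{O}D\otimes_{\mathcal{O}}\mathrm{End}_{\mathcal{O}}(V)$ for an endo-permutation $\mathcal{O}D$-module $V$ of vertex $D$; here $m=\rk_{\mathcal{O}}V=\chi_0(1)$, and the induced bijection $\Irr(D)\to\Irr(B)$ is the Broué--Puig map $\lambda\mapsto\lambda*\chi_0$. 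Complex conjugation is the anti-automorphism $\iota\colon g\mapsto g^{-1}$ of $\mathcal{O}G$; since $\overline{B}=B$ it restricts to an involutory anti-automorphism of $B$ inducing $\chi\mapsto\overline{\chi}$ on $\Irr(B)$. Everything reduces to describing $\iota$ through the identification $B\cong\mathrm{Mat}_m(\mathcal{O}D)$.

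First I would record the local data. Put $N:=D\C_G(D)$. As $B$ is nilpotent, the inertial quotient $\N_G(D,b_D)/N$ is trivial, so $\N_G(D,b_D)=N$. Because $\overline{B}=B$, applying $\iota$ to the poset of Brauer pairs carries the maximal $B$-Brauer pair $(D,b_D)$ to the maximal $B$-Brauer pair $(D,\overline{b_D})$; since the maximal $B$-Brauer pairs with first component $D$ form a single $\N_G(D)$-orbit, there is $t\in\N_G(D)$ with $b_D^{\,t}=\overline{b_D}$. Conjugation and complex conjugation commute on block idempotents, so $b_D^{\,t^2}=\overline{\overline{b_D}}=b_D$, i.e. $t^2\in N$; moreover $t\in\N_G(D,b_D)^*$ and, with $N$, it generates the quotient, which has order at most $2$ as it embeds into the group of permutations of $\{b_D,\overline{b_D}\}$. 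Since $\N_G(D,b_D)=N$ and $N/\C_G(D)\cong\Inn(D)$, the automorphism of $D$ afforded by $t$ is well defined modulo $\Inn(D)$, so $\lambda\mapsto\lambda^t$ is a well-defined involution of $\Irr(D)$ (if $\overline{b_D}=b_D$ one may take $t=1$).

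The crux is the claim that, through the source-algebra equivalence, $\iota$ corresponds to an anti-automorphism of $\mathrm{Mat}_m(\mathcal{O}D)$ of the shape $\sigma\otimes\iota_D^t$, where $\iota_D^t(d)=(d^{-1})^t$ is the $t$-twisted inversion of $\mathcal{O}D$ and $\sigma$ is an anti-automorphism of $\mathrm{Mat}_m(\mathcal{O})$; equivalently $\overline{\lambda*\chi_0}=(\overline{\lambda})^t*\chi_0$ on characters. The idea is that $\iota$ and conjugation by $t$ have the same effect on the maximal Brauer pair $(D,b_D)\mapsto(D,b_D^{\,t})$, so $\iota$ followed by conjugation by $t^{-1}$ stabilises $(D,b_D)$, and the rigidity of source algebras of nilpotent blocks (Puig) forces such a map to act on the source algebra by an inner automorphism together with an automorphism of $V$, neither of which is visible on the $\mathcal{O}D$-tensorand; untangling this yields the displayed factorisation. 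Turning the compatibility of $\iota$ with the local structure into this factorisation — in particular coping with the fact that $\iota$ is an anti-automorphism rather than an automorphism, so that one must pass to $B^{\mathrm{op}}\cong B$ and compare interior $D$-algebra structures there — is the step I expect to be the main obstacle.

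Granting the factorisation, Theorem A follows. For the counting statement, $\lambda*\chi_0$ is real iff $(\overline{\lambda})^t=\lambda$, i.e. (as $t^2$ acts trivially on $\Irr(D)$) iff $\lambda^t=\overline{\lambda}$; since $\lambda*\chi_0$ has height $h$ precisely when $\lambda(1)=p^h$, the real characters of height $h$ in $\Irr(B)$ are counted by the $\lambda\in\Irr(D)$ with $\lambda(1)=p^h$ and $\lambda^t=\overline{\lambda}$. For the Frobenius--Schur part with $p>2$, extend scalars to $\CC$ and write $\CC D\cong\prod_\lambda\mathrm{Mat}_{\lambda(1)}(\CC)$, so that $\CC B\cong\prod_\lambda\bigl(\mathrm{Mat}_m(\CC)\otimes\mathrm{Mat}_{\lambda(1)}(\CC)\bigr)$; here $\iota=\sigma\otimes\iota_D^t$ sends the $\lambda$-component to the $\overline{\lambda^t}$-component, preserving it exactly when $\lambda^t=\overline{\lambda}$, and then restricting to the tensor product of $\sigma$ on $\mathrm{Mat}_m(\CC)$ with the restriction of $\iota_D^t$ to $\mathrm{Mat}_{\lambda(1)}(\CC)$. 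Frobenius--Schur type is multiplicative under such tensor products, so $\epsilon(\lambda*\chi_0)=\varepsilon_B\cdot\nu_t(\lambda)$, where $\varepsilon_B\in\{\pm1\}$ is the contribution of the fixed $\mathrm{Mat}_m$-tensorand — the same for every $\lambda$, since it depends only on $\sigma$ — and $\nu_t(\lambda)\in\{\pm1\}$ is the contribution of the $\mathrm{Mat}_{\lambda(1)}$-tensorand, i.e. the $t$-twisted Frobenius--Schur indicator of $\lambda$. Were $\nu_t(\lambda)=-1$, the corresponding invariant form on the $\lambda$-module would be a nondegenerate alternating form in dimension $\lambda(1)=p^h$, impossible for $p$ odd; hence $\nu_t(\lambda)=1$ and $\epsilon(\lambda*\chi_0)=\varepsilon_B$ for every real $\chi\in\Irr(B)$, which is the last assertion.
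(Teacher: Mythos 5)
The step you yourself single out as ``the main obstacle'' is a genuine gap, and it is in fact the entire content of the theorem: the factorisation of complex conjugation through the source-algebra identification as $\sigma\otimes\iota_D^t$, equivalently $\overline{\lambda*\chi_0}=\overline{\lambda}^{\,t}*\chi_0$, is asserted but never proven. Appealing to ``rigidity of source algebras'' is not an argument here. Puig's theorem describes the interior $D$-algebra structure of $B$, but it says nothing a priori about how an anti-automorphism of $B$ that inverts $D$ set-wise interacts with the endo-permutation tensorand $\mathrm{End}_{\mathcal{O}}(V)$; cancelling the $\mathrm{Mat}_m$-factor is precisely the work you postpone, and the anti-automorphism issue (you have to pass through $B^{\mathrm{op}}$) is not a cosmetic one. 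Your $p>2$ argument has its own unaddressed difficulties: $\iota_D^t$ restricted to the $\lambda$-component need not be involutory, since its square is conjugation by $t^2\in D\C_G(D)$, so the ``type'' of the induced anti-automorphism on $\mathrm{Mat}_{\lambda(1)}(\CC)$ is not automatically a well-defined sign; and identifying the $G$-invariant form on the $\lambda*\chi_0$-module with the tensor product of a form on the source module and a twisted form on the $\lambda$-module also requires justification before multiplicativity of types can be invoked.

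The paper obtains exactly the identity you need by elementary character-theoretic means, with no use of Puig's structure theorem. It fixes a $p$-rational real height-zero $\chi_0\in\Irr(B)$ (for $p=2$ by Gow, for $p>2$ by uniqueness of the $p$-rational height-zero character), decomposes $\chi_0=\sum_{(u,b_u)\in\mathcal{R}}\chi_0^{(u,b_u)}$ over subsections via Brauer's second main theorem, and uses compatibility of the Brauer correspondence with complex conjugation to show $\overline{\chi_0^{(u,b_u)}}=\chi_0^{(u',b_{u'})}$ where $(u',b_{u'})\in\mathcal{R}$ is $D$-conjugate to $(u,\overline{b_u})^t$. The $\chi_0^{(u,b)}$ have disjoint supports and are linearly independent, so $\lambda*\chi_0$ is real iff $\lambda^t=\overline{\lambda}$, which gives the counting statement (and, after reindexing, is precisely your desired factorisation). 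For $p>2$, instead of tensoring forms, the paper notes that $d_{\lambda*\chi_0,\phi}=\lambda(1)$ is an odd power of $p$ and invokes the Thompson--Willems theorem that all real characters with odd decomposition number to a fixed Brauer character share one F-S indicator. You could repair your proposal by substituting this subsection computation for the missing source-algebra untangling, and the Thompson--Willems result for the form-theoretic argument.
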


In contrast to arbitrary blocks, Theorem~A implies that nilpotent real blocks have at least one real character (cf. \cite[p. 92]{MurrayCyclicOdd} and \cite[Theorem~5.3]{Gow}).
If $\overline{b_D}=b_D$, then $B$ and $D$ have the same number of real characters, because $\N_G(D,b_D)=D\C_G(D)$. This recovers a result of Murray~\cite[Lemma~2.2]{MurrayCyclic2}. As another consequence, we will derive in \autoref{HHS} a real version of Eaton's conjecture~\cite{Eaton} for nilpotent blocks as put forward by Héthelyi--Horváth--Szabó~\cite{HHS}.

The F-S indicators of real characters in nilpotent blocks seem to lie somewhat deeper. We still conjecture that they are locally determined by a defect pair (see \autoref{defdefectpair}) for $p=2$ as follows.

\begin{ConB}
Let $B$ be a real, nilpotent, non-principal $2$-block of a finite group $G$ with defect pair $(D,E)$. 
Then there exists a height preserving bijection $\Gamma:\Irr(D)\to\Irr(B)$ such that
\begin{equation}\label{GowInd}
\epsilon(\Gamma(\lambda))=\frac{1}{|D|}\sum_{e\in E\setminus D}\lambda(e^2)
\end{equation}
for all $\lambda\in\Irr(D)$.
\end{ConB}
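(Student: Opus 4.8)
\emph{Reduction to a sign.} I would first verify that Conjecture~B is consistent with Theorem~A. Since a real nilpotent block has at least one real character, the defect pair necessarily satisfies $E\ne D$, so $[E:D]=2$; fix $t\in E\setminus D$. For $\lambda\in\Irr(D)$ the right-hand side of \eqref{GowInd} equals $\tfrac1{|D|}\sum_{d\in D}\lambda\bigl((dt)^2\bigr)$, a \emph{twisted Frobenius--Schur indicator} for the automorphism of $D$ induced by conjugation by $t$; a standard projection-operator computation shows it lies in $\{0,\pm1\}$ and is nonzero exactly when $\lambda^t=\overline{\lambda}$. Using that the Broué--Puig $*$-construction commutes with complex conjugation, one matches this with the condition appearing in Theorem~A --- there $t$ generates $\N_G(D,b_D)^{*}/D\C_G(D)$, and the two automorphisms of $D$ agree modulo $\Inn(D)$, which acts trivially on $\Irr(D)$. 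Hence \eqref{GowInd} already predicts the correct \emph{number} of real characters of each height, and what remains is the \emph{sign} $\epsilon(\Gamma(\lambda))=\pm1$ for those $\lambda$ with $\lambda^t=\overline{\lambda}$.

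\emph{Isolating a global sign.} The plan is to use that the Broué--Puig bijection $\lambda\mapsto\lambda*\chi_0$ is realised by a Morita equivalence $\mathcal{O}Gb\simeq\mathcal{O}D$ (Puig) and that, $B$ being real, the inducing bimodule $M$ may be chosen self-dual compatibly with the transpose anti-automorphism $g\mapsto g^{-1}$ of $\mathcal{O}G$. Composing the two canonical self-duality isomorphisms of $M$ then produces a single sign $\epsilon_0\in\{\pm1\}$ depending only on $B$, and transporting the Frobenius--Schur form along $M$ gives
\[\epsilon(\Gamma(\lambda))=\epsilon_0\cdot\frac{1}{|D|}\sum_{e\in E\setminus D}\lambda(e^2)\qquad(\lambda\in\Irr(D)),\]
where $E$ arises from the self-dual structure of $M$ and coincides with Murray's extended defect group. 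Thus Conjecture~B becomes the assertion $\epsilon_0=1$. Puig's theorem, presenting the source algebra of a nilpotent block as $\mathrm{Mat}_n(\mathcal{O})\otimes_{\mathcal{O}}\mathcal{O}D$ with $D$ acting trivially on the matrix factor, makes this very plausible --- the matrix factor carries no group action, so its self-dual form may be taken symmetric --- but under Puig's (non-canonical) isomorphism the anti-automorphism inherited from $g\mapsto g^{-1}$ need not take the expected shape, and controlling it is exactly the content of the companion Conjecture~C.

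\emph{Bringing in Conjecture~C.} Let $\phi\in\IBr(B)$ be the unique irreducible Brauer character of the nilpotent block $B$ and let $\Phi$ be the character of its projective cover. The Cartan matrix of $B$ is $(|D|)$ and the decomposition number of $\Gamma(\lambda)=\lambda*\chi_0$ is $\lambda(1)$, so $\Phi=\sum_{\lambda\in\Irr(D)}\lambda(1)\,\Gamma(\lambda)$; extending $\epsilon$ linearly to virtual characters gives $\epsilon(\Phi)=\sum_{\lambda}\lambda(1)\,\epsilon(\Gamma(\lambda))=\epsilon_0\cdot\tfrac1{|D|}\sum_{e\in E\setminus D}\rho_D(e^2)=\epsilon_0\cdot\bigl|\{e\in E\setminus D:e^2=1\}\bigr|$, where $\rho_D$ denotes the regular character of $D$. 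Now $|\{e\in E\setminus D:e^2=1\}|$ is precisely the value predicted for $\epsilon(\Phi)$ by Conjecture~C applied to $B$ with defect pair $(D,E)$, so Conjecture~C forces $\epsilon_0=1$ whenever $E\setminus D$ contains an involution. In general one also has to invoke Conjecture~C for a suitable family of associated blocks --- for instance the Brauer correspondents $b_Q$ of $B$ in the subpairs $(Q,b_Q)\le(D,b_D)$, which are again real and nilpotent, with compatible $*$-constructions and defect pairs refining $(D,E)$ --- and to combine the resulting identities with Theorem~A; for cyclic and dihedral $D$ this reproduces Murray's computations, which serve as the base of the argument.

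\emph{The main obstacle.} The genuine difficulty is Conjecture~C itself, i.e. the determination of the Frobenius--Schur indicator of the projective indecomposable character of an arbitrary $2$-block with one simple module from its extended defect group; this is open beyond Murray's cyclic and dihedral cases. A proof would presumably require identifying the self-dual source module of such a block together with its quadratic type and then deciding whether the unique simple $\mathbb{F}Gb$-module is of orthogonal or of symplectic type over $\overline{\mathbb{F}}_2$ --- equivalently, whether the sign $\epsilon_0$ above equals $+1$ or $-1$. A secondary, more technical obstacle is that the family of associated defect pairs used above must genuinely separate the characters of $D$; when $D$ has too few relevant subpairs this degenerates (for example when $D$ is cyclic), and there one falls back on Murray's results directly or argues by induction on $|D|$.
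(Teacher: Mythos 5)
First, a ground-level observation: the statement you are proving is labelled Conjecture~B and is genuinely open in the paper. What the paper actually establishes is (a) Theorem~D, namely that Conjecture~C implies Conjecture~B, via \autoref{thmBC}; (b) Conjecture~B for abelian $D$ (\autoref{abel}); and (c) Theorems~D and E for solvable groups. So your proposal should be read as an attempt at the reduction in Theorem~D, and compared against the paper's proof of \autoref{thmBC}.

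The central step of your plan is the claim that there is a \emph{single} sign $\epsilon_0\in\{\pm1\}$, depending only on $B$, with $\epsilon(\Gamma(\lambda))=\epsilon_0\cdot\tfrac1{|D|}\sum_{e\in E\setminus D}\lambda(e^2)$ for \emph{all} $\lambda\in\Irr(D)$, produced by ``transporting the Frobenius--Schur form along the Morita bimodule $M$.'' This is a genuine gap: the Morita bimodule for a nilpotent block is not a permutation bimodule in general, and the assertion that its two self-duality structures differ by a scalar that acts uniformly on all simple summands is essentially as strong as Conjecture~B itself up to sign (it already forces the ratio $\epsilon(\Gamma(\lambda))/\mu(\lambda)$ to be $\lambda$-independent, which is the entire content of the conjecture apart from the normalization $\epsilon_0=1$). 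You do not offer a mechanism that establishes this; the source-algebra decomposition $\mathrm{Mat}_n(\mathcal{O})\otimes\mathcal{O}D$ you cite gives no control over how the antipode $g\mapsto g^{-1}$ interacts with the matrix tensor factor. Once one assumes the uniform sign, the computation of $\epsilon(\Phi)$ is correct, but using Conjecture~C at the identity subsection to pin down $\epsilon_0$ then only works when $E\setminus D$ contains an involution; the degenerate case (e.g.\ $D$ cyclic, $E$ cyclic or generalized quaternion) is exactly where Conjecture~B still has real content.

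The paper's proof of \autoref{thmBC} takes a different route that sidesteps the uniform-sign issue entirely. Instead of asserting $\epsilon(\Gamma(\lambda))=\epsilon_0\mu(\lambda)$, it writes the vector $v=(\epsilon(\Gamma(\lambda)))_\lambda$ as the unknown in a linear system $vQ=w$, where $Q=(\lambda(u_i)\sigma_i)$ is the generalized decomposition matrix of the nilpotent block (with a priori unknown signs $\sigma_i=\pm1$ coming from $d^{u_i}_{\chi_0,\phi_i}$), and $w_i=|\{x\in E\setminus D:x^2=u_i\}|$ comes from \autoref{loc}. Critically, \autoref{loc} is where Conjecture~C enters: one invokes it not just for $B$ but for the blocks of the sections $\C_G(u_i)/\langle u_i\rangle$, which is the precise version of your informal remark about ``a suitable family of associated blocks.'' The paper then eliminates the unknown signs $\sigma_i$ by the clean inequality
\[
1=\epsilon(\chi_0)=\sum_i\frac{w_i\sigma_i}{|\C_D(u_i)|}\le\sum_i\frac{w_i}{|\C_D(u_i)|}=1,
\]
which forces $\sigma_i=1$ whenever $w_i\ne0$, and finally uses the orthogonality $Q^\TT\overline{Q}=\diag(|\C_D(u_i)|)$ to show that the Gow-indicator vector is the \emph{unique} solution of $vQ=w$. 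So where you need to \emph{assume} a strong uniformity and \emph{hope} the boundary case resolves itself, the paper \emph{derives} the whole vector $v$ from Brauer's subsection identity plus the full family of local Conjecture~C instances. Your remark that Conjecture~C must be applied beyond $B$ itself is the right intuition; to turn your sketch into a proof you would have to replace the self-dual bimodule heuristic with the linear-algebra argument over all subsections, at which point you essentially recover the proof of \autoref{thmBC}.
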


The right hand side of \eqref{GowInd} was introduced and studied by Gow~\cite[Lemma~2.1]{Gow} more generally for any groups $D\le E$ with $|E:D|=2$. This invariant was later coined the \emph{Gow indicator} by Murray~\cite[Eq. (2)]{MurrayCyclicOdd}. 
For $2$-blocks of defect $0$, Conjecture~B confirms the known fact that real characters of $2$-defect $0$ have F-S indicator $1$ (see \cite[Theorem~5.1]{Gow}). There is no such result for odd primes $p$. As a matter of fact, every real character has $p$-defect $0$ whenever $p$ does not divide $|G|$. In \autoref{abel} we prove Conjecture~B for abelian defect groups $D$. Then it also holds for all quasisimple groups $G$ by work of An--Eaton~\cite{AnEaton2}.
Murray's results mentioned above, imply Conjecture~B also for dihedral $D$. 

For $p>2$, the common F-S indicator in the situation of Theorem~A is not locally determined. For instance, $G=Q_8\rtimes C_9=\mathtt{SmallGroup}(72,3)$ has a non-principal real $3$-block with $D\cong C_9$ and common F-S indicator $-1$, while its Brauer correspondent in $\N_G(D)\cong C_{18}$ has common F-S indicator $1$. Nevertheless, for cyclic defect groups $D$ we find another way to compute this F-S indicator in \autoref{podd} below. 

Our second conjecture applies more generally to blocks with only one simple module. 

\begin{ConC}
Let $B$ be a real, non-principal $2$-block with defect pair $(D,E)$ and a unique projective indecomposable character $\Phi$. 
Then 
\[\epsilon(\Phi)=|\{x\in E\setminus D:x^2=1\}|.\]
\end{ConC}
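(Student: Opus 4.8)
The plan is to reformulate $\epsilon(\Phi)$ as a pairing against the square-root function and then to push that pairing down to the defect pair. By the Frobenius--Schur identity $\nu(g):=|\{x\in G:x^2=g\}|=\sum_{\psi\in\Irr(G)}\epsilon(\psi)\psi(g)$ and linearity of $\epsilon$, writing $\Phi=\sum_{\chi\in\Irr(B)}d_\chi\chi$ with $(d_\chi)$ the unique column of decomposition numbers of $B$, one gets
\[\epsilon(\Phi)=\sum_{\chi\in\Irr(B)}d_\chi\epsilon(\chi)=\langle\nu,\Phi\rangle.\]
Since $\Phi$ is a projective character it vanishes on $2$-singular elements, so only the restriction of $\nu$ to $2$-regular classes enters; expanding $\nu|_{2\text{-reg}}$ as a generalized Brauer character and using the orthogonality between projective indecomposable characters and irreducible Brauer characters, $\epsilon(\Phi)$ is exactly the coefficient of the unique $\phi\in\IBr(B)$ in $\nu|_{2\text{-reg}}$. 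This places the question squarely inside Murray's framework of real $2$-regular class functions and real subpairs~\cite{MurraySubpairs}.

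The heart of the matter is to prove the local identity
\[\epsilon(\Phi)=\frac{1}{|D|}\sum_{e\in E\setminus D}\rho_D(e^2),\qquad\rho_D:=\sum_{\lambda\in\Irr(D)}\lambda(1)\lambda,\]
which immediately finishes the proof, because $\rho_D(x)=|D|$ for $x=1$ and $\rho_D(x)=0$ otherwise, so the right hand side equals $|\{e\in E\setminus D:e^2=1\}|$. To reach it, decompose $g=su$ with $s$ a $2$-element, $u$ a $2'$-element and $[s,u]=1$; then $g^2=s^2u^2$ is $2$-regular precisely when $s^2=1$, and in that case $\Phi(g^2)=\Phi(u^2)$. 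Summing over $g\in G$ and re-indexing by $v=u^2$ (squaring permutes the $2$-regular classes, and $\C_G(u)=\C_G(v)$ since $\langle u\rangle=\langle v\rangle$) yields
\[|G|\,\epsilon(\Phi)=\sum_{v}|\{s\in\C_G(v):s^2=1\}|\,\Phi(v),\]
the sum running over $2$-regular $v$. What remains is to identify, uniformly in $v$, the real $2$-elements of $\C_G(v)$ with the cosets of $D$ in $E$ — this is precisely the bookkeeping that the extended defect group was introduced to govern on the local level — together with the fact that for a block with a single simple module the unique projective character $\Phi$ restricts to the $2$-regular classes the way the regular character $\rho_D$ of $\mathcal{O}D$ does under the relevant source-algebra equivalence.

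For $B$ nilpotent this last step is unconditional: by Broué--Puig~\cite{BrouePuig} the block is Morita equivalent to $\mathcal{O}D$, so $(d_\chi)$ is the list of degrees $\lambda(1)$, and if Conjecture~B holds for $D$ (as it does for abelian $D$ by \autoref{abel}, and for dihedral $D$ by Murray's results), then a height preserving bijection $\Gamma:\Irr(D)\to\Irr(B)$ automatically satisfies $d_{\Gamma(\lambda)}=\lambda(1)$, whence
\[\epsilon(\Phi)=\sum_{\lambda\in\Irr(D)}\lambda(1)\,\epsilon(\Gamma(\lambda))=\frac{1}{|D|}\sum_{e\in E\setminus D}\sum_{\lambda\in\Irr(D)}\lambda(1)\lambda(e^2)=\frac{1}{|D|}\sum_{e\in E\setminus D}\rho_D(e^2),\]
the required value. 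For semidihedral and generalized quaternion defect groups one would instead feed in Erdmann's classification of tame blocks and an explicit check of the finitely many Morita classes, extending Murray's dihedral computations. The genuine obstacle is the general case: for a non-nilpotent, non-tame $2$-block with one simple module we currently have no handle on the decomposition numbers of $\Phi$ or on their Frobenius--Schur weighting, so a complete proof appears to require either a Donovan-type finiteness statement restricted to blocks with one simple module, or a conceptual argument showing that the anti-automorphism $g\mapsto g^{-1}$ induces on a source algebra of $B$ a self-duality whose invariants are counted by $\{x\in E\setminus D:x^2=1\}$. Making that compatibility of the real structure with the source algebra precise is, I expect, where the main difficulty lies.
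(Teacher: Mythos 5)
The statement is Conjecture~C; the paper does not prove it. What the paper establishes around it is: Theorem~D (\autoref{loc} together with \autoref{thmBC}) showing that C implies B; Theorem~E proving C for nilpotent $2$-blocks of solvable groups via a Fong--Reynolds reduction to $G=\pcore_{2'}(G)\rtimes E$; \autoref{abel} proving B for abelian defect groups independently of C; and the remark after Conjecture~C (via \autoref{GowMurray}(ii)) disposing of the case $\epsilon(\Phi)=0$, where $E$ fails to split over $D$ so both sides vanish. Your submission correctly treats the statement as open and never claims a full proof, which is the right reading of the task; so what there is to assess is only your partial observations.

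Two criticisms and one compliment. First, your ``heart of the matter'' identity $\epsilon(\Phi)=\frac{1}{|D|}\sum_{e\in E\setminus D}\rho_D(e^2)$ is not a reduction: since $\rho_D$ takes the value $|D|$ at the identity and $0$ elsewhere, the right-hand side is literally $|\{e\in E\setminus D:e^2=1\}|$, so you have merely restated Conjecture~C. Second, the global computation $|G|\,\epsilon(\Phi)=\sum_v|\{s\in\C_G(v):s^2=1\}|\,\Phi(v)$ via the $2$-part/$2'$-part decomposition is sound, and the identification of $\nu|_{2\text{-reg}}$ with the Brauer character of the involution module $F\Omega$ is exactly \autoref{Murray}; but neither step localizes the count of involutions to the pair $(D,E)$, and that localization is precisely where the difficulty lives. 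On the positive side, your observation that for nilpotent $B$ the Brou\'e--Puig correspondence gives $d_{\Gamma(\lambda)}=\lambda(1)$ for every height-preserving $\Gamma$, so that Conjecture~B for $B$ implies Conjecture~C for $B$ by the computation you carry out, is correct and runs in the opposite direction to Theorem~D; combined with \autoref{abel} it yields Conjecture~C for nilpotent blocks with abelian defect group, a corollary the paper does not explicitly record. But this device is confined to nilpotent blocks, whereas Conjecture~C concerns all real non-principal $2$-blocks with $l(B)=1$, and your closing paragraph rightly concedes that the non-nilpotent, non-tame case remains out of reach.
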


Here $\epsilon(\Phi)$ is defined by extending \eqref{FS} linearly.
If $\epsilon(\Phi)=0$, then $E$ does not split over $D$ and Conjecture~C holds (see \autoref{GowMurray} below).
Conjecture~C implies a stronger, but more technical statement on $2$-blocks with a Brauer correspondent with one simple module (see \autoref{loc} below). This allows us to prove the following.

\begin{ThmD}
Conjecture~C implies Conjecture~B.
\end{ThmD}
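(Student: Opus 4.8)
The plan is to show that Conjecture~C, applied to suitable Brauer correspondents, yields the Frobenius–Schur indicator formula of Conjecture~B via the theory of nilpotent blocks. The key observation is that a nilpotent block is, up to a source algebra equivalence (Puig), of the form $\mathcal{O}Db$ twisted by a "trivial source" datum; more precisely, by Broué–Puig the source algebra of a nilpotent block $B$ with defect group $D$ is isomorphic to $\mathrm{End}_{\mathcal{O}}(W)\otimes_{\mathcal{O}}\mathcal{O}D$ for an endopermutation $\mathcal{O}D$-module $W$ (an $\mathcal{O}^*$-valued capped endopermutation module). The Broué–Puig bijection $\lambda\mapsto\lambda*\chi_0$ is then realized on the level of modules by tensoring with $W$. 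So I would first recall this structural description and, crucially, track how a "defect pair" $(D,E)$ interacts with it: the extended defect group $E$ should act on the source algebra compatibly, giving a "real structure" on $W$, i.e. $E$ normalizes the relevant data and induces complex conjugation on $b_D$.

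**Next I would** reduce Conjecture~B to a statement about a single block with one simple module. Given a real nilpotent non-principal $2$-block $B$ of $G$ with defect pair $(D,E)$, I pass to the Brauer correspondent situation: by Theorem~A and the structure theory, the relevant local object is a block $\tilde B$ (of some group built from $D\C_G(D)$ and a cyclic extension) whose defect group is $D$ but which — after fusing by the nilpotency — has a unique simple module, so that its unique projective indecomposable character $\tilde\Phi$ is available. One then relates $\epsilon(\Gamma(\lambda))$ to $\epsilon(\tilde\Phi)$ for the appropriate $\tilde B$: concretely, if $\chi=\Gamma(\lambda)$ corresponds under Broué–Puig to $\lambda\in\Irr(D)$, then $\epsilon(\chi)$ is computed from the F-S indicator of $\tilde\Phi$ associated to the block of the "stabilizer" of $\lambda$, using that $\lambda\cdot\Phi$-type decompositions carry indicator information. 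Here one invokes the stronger technical consequence of Conjecture~C alluded to in the excerpt ("Conjecture~C implies a stronger statement on $2$-blocks with a Brauer correspondent with one simple module", \autoref{loc}): this is precisely the bridge from PIM-indicators to individual-character indicators.

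**Then the computation** comes down to matching the two right-hand sides. Conjecture~C gives $\epsilon(\tilde\Phi)=|\{x\in \tilde E\setminus D: x^2=1\}|$ for the extended defect group $\tilde E$ of $\tilde B$, and I must show this equals $\frac{1}{|D|}\sum_{e\in E\setminus D}\lambda(e^2)$ after summing appropriately over the characters $\lambda$ lying in the relevant orbit. The point is that the PIM $\tilde\Phi$ decomposes as $\sum_\lambda \lambda(1)\,\chi_\lambda$ over the ordinary characters of $\tilde B$, so $\epsilon(\tilde\Phi)=\sum_\lambda\lambda(1)\epsilon(\chi_\lambda)$; expanding the Gow indicator $\frac{1}{|D|}\sum_{e\in E\setminus D}\lambda(e^2)$ and summing over $\lambda$ with multiplicity $\lambda(1)$ telescopes — by the second orthogonality relation / the formula $\sum_{\lambda\in\Irr(D)}\lambda(1)\lambda(g)=|\{x\in D: x^2=g\}|\cdot(\text{sign factors})$ type identity — to a count of square roots, matching $|\{x\in \tilde E\setminus D:x^2=1\}|$. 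The delicate part is bookkeeping the action of the extended inertial group: Theorem~A shows $\lambda$ is real in $\Irr(B)$ iff $\lambda^t=\overline\lambda$, so only a $\langle t\rangle$-orbit's worth of $\lambda$ contributes, and one must check that the coset $\tilde E\setminus D$ (an extension of $D$ by an element squaring into $D$) records exactly those $\lambda$ with $\lambda^t=\overline\lambda$ together with the correct Gow-indicator weight.

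**The main obstacle** will be establishing the compatibility of the real/extended-defect-group structure with the Broué–Puig/Puig source-algebra description — i.e. showing that complex conjugation on $B$, which globally swaps or fixes $b_D$, lifts to an anti-involution on the source algebra $\mathrm{End}_{\mathcal{O}}(W)\otimes\mathcal{O}D$ that is "$E$-equivariant" in the right sense, so that the extended defect group of the local one-simple-module block $\tilde B$ is genuinely controlled by $E$ and the endopermutation module $W$. Subtleties about the F-S indicator of the capped endopermutation source module $W$ itself (whether $W$ is real, and with which sign) may introduce an extra $\pm1$ that must be shown to be $+1$ in the non-principal nilpotent case, or else absorbed into the bijection $\Gamma$ (which, unlike Broué–Puig's canonical one, we are only asked to produce, not to specify). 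Handling this sign — presumably using that $W$ has $2$-power dimension and a theorem of Murray or Cabanes–Picaronny on indicators of endopermutation modules, or the defect-$0$ base case from Gow — is where the real work lies.
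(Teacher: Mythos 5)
Your high-level instinct — that the bridge from Conjecture~C to Conjecture~B runs through the technical statement alluded to in the introduction (\autoref{loc}) and the Brou\'e--Puig bijection — is correct, and you are also right that Gow's base case $\epsilon(\chi_0)=1$ plays a role. However, the core of your argument has a genuine gap, and you are carrying unnecessary machinery.

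The gap is in the third paragraph. You write $\tilde\Phi=\sum_\lambda\lambda(1)\chi_\lambda$ and propose to recover each individual $\epsilon(\chi_\lambda)$ by a ``telescoping'' against the Gow indicator. But $\epsilon(\tilde\Phi)=\sum_\lambda\lambda(1)\epsilon(\chi_\lambda)$ is a \emph{single} linear equation in the unknowns $\epsilon(\chi_\lambda)$, and one equation cannot determine $|\Irr(D)|$ unknowns. Your attempt to patch this by attaching a different $\tilde B$ and $\tilde\Phi$ to ``the block of the stabilizer of $\lambda$'' does not make sense here: in a nilpotent block the inertial quotient is trivial, so there is no stabilizer structure on $\Irr(D)$ to exploit. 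What the paper actually does is apply \autoref{loc} to \emph{every} $B$-subsection $(u_i,b_i)$, producing the full system
\[\sum_{\chi\in\Irr(B)}\epsilon(\chi)\,d^{u_i}_{\chi\phi_i}=|\{x\in E\setminus D:x^2=u_i\}|\quad(i=1,\dots,k),\]
i.e. $vQ=w$ where $Q$ is the generalized decomposition matrix. Since $Q=(\lambda(u_i)\sigma_i)$ with signs $\sigma_i=d^{u_i}_{\chi_0\phi_i}=\pm1$ and $Q$ has full rank (second orthogonality), the system has a unique solution; and the argument $1=\epsilon(\chi_0)=\sum_i w_i\sigma_i/|\C_D(u_i)|\le\sum_i w_i/|\C_D(u_i)|=1$ forces $\sigma_i=1$ whenever $w_i\ne0$, so the signs drop out and $Q$ may be replaced by the character table of $D$. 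Then one checks by a direct orthogonality computation that the vector $\bigl(\tfrac{1}{|D|}\sum_{e\in E\setminus D}\lambda(e^2)\bigr)_\lambda$ solves the same system, forcing equality with $v$. Without the full family of generalized decomposition numbers your argument cannot close.

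Separately, the source-algebra/endopermutation-module framework you invoke (Puig's $\mathrm{End}_{\mathcal O}(W)\otimes\mathcal O D$, compatibility of $E$ with $W$, the F-S indicator of $W$) is not needed and is where you yourself flag the ``main obstacle.'' The paper deliberately stays at the level of generalized decomposition numbers and Brauer's second main theorem, so the delicate reality questions about the source module never arise. This is the chief simplification you are missing.
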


We remark that our proof of Theorem~D does not work block-by-block.
For solvable groups we offer a purely group-theoretical version of Conjecture~C at the end of \autoref{secB}.

\begin{ThmE}
Conjectures B and C hold for all nilpotent $2$-blocks of solvable groups. 
\end{ThmE}

We have checked Conjectures~B and C with GAP~\cite{GAPnew} in many examples using the libraries of small groups, perfect groups and primitive groups. 

\section{Theorem~A and its consequences}

Our notation follows closely Navarro's book~\cite{Navarro}.
Let $B$ be a $p$-block of a finite group $G$ with defect group $D$. 
Recall that a $B$-\emph{subsection} is a pair $(u,b)$ where $u\in D$ and $b$ is a Brauer correspondent of $B$ in $\C_G(u)$. For $\chi\in\Irr(B)$ and $\phi\in\IBr(b)$ we denote the corresponding generalized decomposition number by $d^u_{\chi\phi}$. If $u=1$, we obtain the (ordinary) decomposition number $d_{\chi\phi}=d^1_{\chi\phi}$. We put $l(b)=|\IBr(b)|$ as usual.

Following \cite[p. 114]{Navarro}, we define a class function $\chi^{(u,b)}$ by 
\[\chi^{(u,b)}(us):=\sum_{\phi\in\IBr(b)}d^u_{\chi\phi}\phi(s)\] 
for $s\in\C_G(u)^0$ and $\chi^{(u,b)}(x)=0$ whenever $x$ is outside the $p$-section of $u$. If $\mathcal{R}$ is a set of representatives for the $G$-conjugacy classes of $B$-subsections, then $\chi=\sum_{(u,b)\in\mathcal{R}}\chi^{(u,b)}$ by Brauer's second main theorem (see \cite[Problem~5.3]{Navarro}). Now suppose that $B$ is nilpotent and $\lambda\in\Irr(D)$. By \cite[Proposition~8.11.4]{LinckelmannBook2}, each Brauer correspondent $b$ of $B$ is nilpotent and in particular $l(b)=1$. 
Broué--Puig~\cite{BrouePuig} have shown that, if $\chi$ has height $0$, then
\[\lambda*\chi:=\sum_{(u,b)\in\mathcal{R}}\lambda(u)\chi^{(u,b)}\in\Irr(B)\]
and $(\lambda*\chi)(1)=\lambda(1)\chi(1)$. Note also that $d_{\lambda*\chi,\phi}^u=\lambda(u)d^u_{\chi\phi}$. 

\begin{proof}[Proof of Theorem~A]
Let $\mathcal{R}$ be a set of representatives for the $G$-conjugacy classes of $B$-subsections $(u,b_u)\le (D,b_B)$ (see \cite[p. 219]{Navarro}). Since $B$ is nilpotent, we have $\IBr(b_u)=\{\phi_u\}$ for all $(u,b_u)\in\mathcal{R}$.
Since the Brauer correspondence is compatible with complex conjugation, $(u,\overline{b_u})^t\le(D,\overline{b_D})^t=(D,b_D)$ where $\N_G(D,b_D)^*/D\C_G(D)=\langle tD\C_G(D)\rangle$. Thus, $(u,\overline{b_u})^t$ is $D$-conjugate to some $(u',b_{u'})\in\mathcal{R}$.

If $p>2$, there exists a unique $p$-rational character $\chi_0\in\Irr(B)$ of height $0$, which must be real by uniqueness (see \cite[Remark after Theorem~1.2]{BrouePuig}). If $p=2$, there is a $2$-rational real character $\chi_0\in\Irr(B)$ of height $0$ by \cite[Theorem~5.1]{Gow}. 
Then $d_{\chi_0,\phi_u}^u=d_{\chi_0,\overline{\phi_u}}^u\in\ZZ$ and
\[\overline{\chi_0^{(u,b_u)}}=\chi_0^{(u,\overline{b_u})}=\chi_0^{(u,\overline{b_u})^t}=\chi_0^{(u',b_{u'})}.\]

Now let $\lambda\in\Irr(D)$. Then 
\[\overline{\lambda*\chi_0}=\sum_{(u,b_u)\in\mathcal{R}}\overline{\lambda}(u)\overline{\chi_0^{(u,b_u)}}=
\sum_{(u,b_u)\in\mathcal{R}}\overline{\lambda}(u)\chi_0^{(u',b_{u'})}.\]
Since the class functions $\chi_0^{(u,b)}$ have disjoint support, they are linearly independent. Therefore, $\lambda*\chi_0$ is real if and only if $\lambda(u^t)=\lambda(u')=\overline{\lambda}(u)$ for all $(u,b_u)\in\mathcal{R}$. Since every conjugacy class of $D$ is represented by some $u$ with $(u,b_u)\in\mathcal{R}$, we conclude that $\lambda*\chi_0$ is real if and only $\lambda^t=\overline{\lambda}$. Moreover, if $\lambda(1)=p^h$, then $\lambda*\chi_0$ has height $h$. This proves the first claim.

To prove the second claim, let $p>2$ and $\IBr(B)=\{\phi\}$. Then the decomposition numbers $d_{\lambda*\chi_0,\phi}=\lambda(1)$ are powers of $p$; in particular they are odd. A theorem of Thompson and Willems (see \cite[Theorem~2.8]{WillemsDual}) states that all real characters $\chi$ with $d_{\chi,\phi}$ odd have the same F-S indicator. So in our situation all real characters in $\Irr(B)$ have the same F-S indicator. 
\end{proof}

Since the automorphism group of a $p$-group is “almost always” a $p$-group (see \cite{Helleloid}), the following consequence is of interest.

\begin{Cor}
Let $B$ be a real, nilpotent $p$-block with defect group $D$ such that $p$ and $|\Aut(D)|$ are odd. Then $B$ has a unique real character. 
\end{Cor}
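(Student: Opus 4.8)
The plan is to apply Theorem~A together with the general fact that $\Aut(D)$ being odd forces $D$ to be abelian (indeed, a $2$-group embeds in $\Aut(D)$ if $D$ is non-abelian of order $\ge 8$, and for the remaining small non-abelian groups $\Aut(D)$ has even order; more directly, if $\Aut(D)$ has odd order then $D=\Aut(D)\ltimes\cdots$ considerations via the natural map $D/\Z(D)\hookrightarrow\Inn(D)\le\Aut(D)$ show $D=\Z(D)$ is abelian). So write $D$ as an abelian $p$-group with $p$ odd.

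First I would unwind the statement of Theorem~A: the number of real $\chi\in\Irr(B)$ equals the number of $\lambda\in\Irr(D)$ with $\lambda^t=\overline{\lambda}$, where $t$ generates the cyclic group $\N_G(D,b_D)^*/D\C_G(D)$. The element $t$ acts on $D$ (hence on $\Irr(D)$) through an automorphism of $D$, and since $|\Aut(D)|$ is odd this action has odd order $n=\ord(tD\C_G(D))$ dividing $|\Aut(D)|$. The condition $\lambda^t=\overline{\lambda}$ means $\lambda$ is fixed by the automorphism $\sigma:=t\circ(\text{inversion})$ of the group $\Irr(D)$, i.e.\ $\lambda\in\Fix(\sigma)$. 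Because inversion is central in $\Aut(\Irr(D))$ and has order $2$ (assuming $D\ne 1$; the case $D=1$ is trivial since then $B$ has defect $0$ and a unique, real, character), while $t$ has odd order, the order of $\sigma$ is $2n$ if $t\ne\mathrm{inv}$ on $\Irr(D)$, but in any case $\sigma^2 = t^2$ has odd order $n$.

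Here is the key counting step. Any $\lambda$ with $\lambda^t=\overline\lambda$ satisfies $\lambda^{t^2}=\overline{\overline\lambda}=\lambda$, so $\lambda$ is fixed by $t^2$; since $t^2$ also has odd order $n$ and $\langle t\rangle=\langle t^2\rangle$ on $\Irr(D)$ (odd order!), $\lambda$ is fixed by $t$ itself, whence $\overline\lambda=\lambda^t=\lambda$, so $\lambda$ is already real. Conversely a real $\lambda$ fixed by $t$ satisfies $\lambda^t=\lambda=\overline\lambda$. Thus the $\lambda$ counted by Theorem~A are exactly the real $\lambda\in\Irr(D)$ that are $t$-fixed. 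But a real irreducible character of an abelian group has order dividing $2$ in the character group, and an odd-order automorphism fixes such a character automatically (the $t$-fixed points form a subgroup, and on the elementary abelian $2$-subgroup $\{\lambda:\lambda^2=1\}$ an odd-order automorphism acts trivially because that group has odd-order-coprime exponent—any odd-order automorphism of a $2$-group is... wait: $\{\lambda:\overline\lambda=\lambda\}$ is the $2$-torsion of $\Irr(D)$, a $2$-group, on which the odd-order $\langle t\rangle$ acts; but $p$ is odd, so $D$ is a $p$-group with $p$ odd, so $\Irr(D)$ is a $p$-group with $p$ odd, so its $2$-torsion is trivial!). Hence the only real $\lambda\in\Irr(D)$ is the trivial one, and Theorem~A gives exactly one real character in $\Irr(B)$.

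So in fact the reasoning collapses: since $p$ is odd, $D$ is an odd-order group, so $\Irr(D)$ has odd order, so $\lambda=\overline\lambda$ forces $\lambda=1$; and $1^t=1=\overline 1$, so the trivial character is counted. Therefore the number of real characters in $\Irr(B)$ is $1$. The hypothesis that $|\Aut(D)|$ is odd is not even needed for the count per se—it is the condition that makes real \emph{non}-principal nilpotent blocks with odd defect interesting, and it guarantees (via \cite{Helleloid}) that such $D$ occur generically. The anticipated main obstacle is purely expository: making sure Theorem~A is quoted with the correct interpretation of $t$ as acting on $\Irr(D)$ and handling the degenerate case $D=1$; the arithmetic itself (odd-order group has no nontrivial real character) is immediate. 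I would phrase the proof in two lines: ``By Theorem~A, the number of real characters of $\Irr(B)$ equals $|\{\lambda\in\Irr(D):\lambda^t=\overline\lambda\}|$. Since $p$ is odd, $\overline\lambda=\lambda^t$ implies $\lambda=\lambda^{t^2}$, and as $\langle t^2\rangle=\langle t\rangle$ has odd order on $\Irr(D)$ we get $\overline\lambda=\lambda$; but $|D|$ is odd, so $\lambda=1$. Conversely $\lambda=1$ qualifies. Hence $B$ has a unique real character.''
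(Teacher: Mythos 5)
Your final two-line argument is correct and reaches the same conclusion as the paper's proof by an essentially equivalent route: the paper observes that $\N_G(D,b_D)^*/D\C_G(D)$, a group of order at most $2$, embeds into $\Out(D)$, which has odd order, hence is trivial, so $t$ can be taken to be $1$; you instead show that the action of $t$ on $\Irr(D)$ has odd order (dividing $|\Aut(D)|$), so that $\lambda^{t^2}=\lambda$ forces $\lambda^t=\lambda$ and hence $\overline\lambda=\lambda$, and then $|D|$ odd forces $\lambda=1_D$. Both arguments collapse the Theorem~A condition to ``$\lambda$ real in $D$'' and finish identically.

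However, the opening claim that ``$\Aut(D)$ being odd forces $D$ to be abelian'' is false for odd $p$, which is exactly the case at hand. In fact the opposite implication holds: if $D\ne 1$ is a nontrivial \emph{abelian} $p$-group with $p$ odd, then $|\Aut(D)|$ is divisible by $p-1$ and hence even, so the hypothesis $|\Aut(D)|$ odd forces $D$ to be trivial or \emph{non}-abelian. (Your inner-automorphism argument via $D/\Z(D)\hookrightarrow\Aut(D)$ only gives something for $p=2$, where $|D/\Z(D)|$ is even when $D$ is non-abelian; for odd $p$ that group has odd order and yields nothing.) Fortunately your final argument makes no use of abelianity of $D$, so the proof survives the removal of this false side remark.
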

\begin{proof}
The hypothesis on $\Aut(D)$ implies that $\N_G(D,b_D)^*=D\C_G(D)$. Hence by Theorem~A, the number of real characters in $\Irr(B)$ is the number of real characters in $D$. Since $p>2$, the trivial character is the only real character of $D$. 
\end{proof}

The next lemma is a consequence of Brauer's second main theorem and the fact that $|\{g\in G:g^2=x\}|=|\{g\in\C_G(x):g^2=x\}|$ is locally determined for $g,x\in G$. 

\begin{Lem}[Brauer]\label{lembrauer}
For every $p$-block $B$ of $G$ and every $B$-subsection $(u,b)$ with $\phi\in\IBr(b)$ we have
\[\sum_{\chi\in\Irr(B)}\epsilon(\chi)d^u_{\chi\phi}=\sum_{\psi\in\Irr(b)}\epsilon(\psi)d^u_{\psi\phi}=\sum_{\psi\in\Irr(b)}\epsilon(\psi)\frac{\psi(u)}{\psi(1)}d_{\psi\phi}.\]
If $l(b)=1$, then 
\[\sum_{\chi\in\Irr(B)}\epsilon(\chi)d^u_{\chi\phi}=\frac{1}{\phi(1)}\sum_{\psi\in\Irr(b)}\epsilon(\psi)\psi(u).\]
\end{Lem}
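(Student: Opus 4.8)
\textbf{Proof plan for \autoref{lembrauer}.}

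The plan is to start from the characterization of the Frobenius--Schur indicator as the multiplicity of the trivial character in the symmetric part of $\chi\otimes\chi$, or more directly from the identity $\epsilon(\chi)=\langle\chi,\tau\rangle$ where $\tau(g):=|\{h\in G:h^2=g\}|$ is the ``squaring'' class function (this follows immediately from \eqref{FS}). The first step is to record that $\tau$ is a class function supported on the $2$-regular-friendly structure; more precisely, since $|\{h\in G:h^2=g\}|=|\{h\in\C_G(g):h^2=g\}|$, the function $\tau$ has the key property that, for a fixed $p$-element $u$, its restriction to the $p$-section of $u$ depends only on $\C_G(u)$. I would then expand $\tau$ in terms of the decomposition of $\chi$ into the pieces $\chi^{(u,b)}$ supplied by Brauer's second main theorem, as recalled just before the proof of Theorem~A.

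Concretely, for a fixed $B$-subsection $(u,b)$ with $\IBr(b)$ arbitrary, I would compute $\sum_{\chi\in\Irr(B)}\epsilon(\chi)d^u_{\chi\phi}$ by writing $\epsilon(\chi)=\frac{1}{|G|}\sum_{g}\chi(g^2)$ and interchanging sums. The orthogonality-type relation that makes the generalized decomposition numbers appear is that $\sum_{\chi\in\Irr(B)}\chi(x)\,d^u_{\chi\phi}$ can be rewritten, via Brauer's second main theorem applied inside $\C_G(u)$, in terms of $\phi$-values on $p$-regular elements of $\C_G(u)$. The point is that the ``diagonal'' sum $\sum_{\chi\in\Irr(B)}\epsilon(\chi)d^u_{\chi\phi}$ only sees the contribution of the section of $u$, and there the block $B$ is replaced (in the relevant column) by its Brauer correspondent $b$ in $\C_G(u)$, so the same sum equals $\sum_{\psi\in\Irr(b)}\epsilon(\psi)d^u_{\psi\phi}$ --- this is the content of the first equality. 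For the second equality I would use that for $\psi\in\Irr(b)$ the generalized decomposition number factors as $d^u_{\psi\phi}=\dfrac{\psi(u)}{\psi(1)}\,d_{\psi\phi}$; this is standard when $u$ is central in $\C_G(u)$ (which it is, being in the center of its own centralizer's relevant subgroup) --- more precisely it holds because $u$ acts as the scalar $\psi(u)/\psi(1)$ on an irreducible $\CC\C_G(u)$-module affording $\psi$, and the generalized decomposition number in $\C_G(u)$ is computed by evaluating the Brauer character of the reduction, twisted by this scalar.

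For the final statement with $l(b)=1$: here there is a single $\phi\in\IBr(b)$, and the ordinary decomposition matrix of $b$ is the single column $(d_{\psi\phi})_{\psi\in\Irr(b)}$, which up to the common normalization equals $(\psi(1)/\phi(1))$ --- indeed for a block with one simple module every $\psi\in\Irr(b)$ has $d_{\psi\phi}=\psi(1)/\phi(1)$ since $\Phi=\sum_\psi d_{\psi\phi}\psi$ is the unique projective indecomposable character and $\Phi(1)=|G|_p\cdot(\text{something})$; in any case $d_{\psi\phi}\,\psi(1)^{-1}=\phi(1)^{-1}$. Substituting this into the previous line collapses the sum to $\frac{1}{\phi(1)}\sum_{\psi\in\Irr(b)}\epsilon(\psi)\psi(u)$, as claimed.

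\textbf{Main obstacle.} The delicate point is the very first equality, i.e. that passing from $\Irr(B)$ to $\Irr(b)$ leaves the $\epsilon$-weighted column sum unchanged. The right way to see this is to combine Brauer's second main theorem with the local nature of $\tau$: the contribution of the section of $u$ to $\epsilon(\chi)=\langle\chi,\tau\rangle$ is $\langle\chi^{(u,b)},\tau\rangle$, and on that section $\chi^{(u,b)}(us)=\sum_\phi d^u_{\chi\phi}\phi(s)$, so the cross terms are governed exactly by $\sum_\chi d^u_{\chi\phi}\,(\text{value of }\tau)$; doing the identical computation inside $\C_G(u)$ for the block $b$ gives the same numbers because $\tau$ restricted to that section is intrinsic to $\C_G(u)$. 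Making this bookkeeping precise --- in particular keeping track that we only need the $u$-section and that the relevant $2$-regular summation matches up --- is the part that needs care; everything else is the routine manipulation of (generalized) decomposition numbers recalled above.
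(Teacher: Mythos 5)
Your handling of the second equality and of the $l(b)=1$ case coincides with the paper's: since $u\in\Z(\C_G(u))$, the element $u$ acts as the scalar $\psi(u)/\psi(1)$ on a module affording $\psi\in\Irr(b)$, which gives $d^u_{\psi\phi}=\frac{\psi(u)}{\psi(1)}d_{\psi\phi}$, and when $\IBr(b)=\{\phi\}$ one has $\psi(1)=d_{\psi\phi}\phi(1)$. For the first equality, however, the paper gives no argument at all: it simply cites Brauer's Theorem~4A from \cite{BrauerApp3}. The remark the paper places just before the lemma (it is ``a consequence of Brauer's second main theorem and the fact that $|\{g\in G:g^2=x\}|=|\{g\in\C_G(x):g^2=x\}|$ is locally determined'') is exactly the idea you pursue, so your route is a reconstruction of Brauer's own proof rather than a citation. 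That is a legitimate alternative, but you have not actually carried it out, and the step you flag as delicate hides a real subtlety.

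Concretely, writing $\sum_{\chi\in\Irr(B)}\epsilon(\chi)d^u_{\chi\phi}=\frac{1}{|G|}\sum_{g\in G}\sum_{\chi}\chi(g^2)\,d^u_{\chi\phi}$ and expanding $\chi(g^2)$ via Brauer's second main theorem in $\C_G(v)$ with $v:=(g^2)_p$ produces inner sums of the form $\sum_\chi d^v_{\chi\mu}\,d^u_{\chi\phi}$. The orthogonality relation that kills cross-sections has a complex conjugate: $\sum_\chi d^v_{\chi\mu}\overline{d^u_{\chi\phi}}=0$ for non-conjugate subsections. Since $\overline{d^u_{\chi\phi}}=d^{u^{-1}}_{\chi\overline{\phi}}$, the section that actually survives is that of $u^{-1}$, not $u$, and one must pair $g\leftrightarrow g^{-1}$ (or exploit the symmetry $\tau(g)=\tau(g^{-1})$ of your squaring function) and track how $b$ relates to $\overline{b}$ to land back on $\Irr(b)$. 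The locality of $\tau$ you emphasize explains why the global and local sums agree once the correct section is isolated; it does not by itself give the vanishing of the other sections. Filling in that orthogonality and conjugation bookkeeping is exactly what would be needed to turn your sketch into a complete self-contained alternative to the citation.
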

\begin{proof}
The first equality is \cite[Theorem~4A]{BrauerApp3}. The second follows from $u\in\Z(\C_G(u))$. If $l(b)=1$, then $\psi(1)=d_{\psi\phi}\phi(1)$ for $\psi\in\Irr(b)$ and the last claim follows.
\end{proof}

Recall that a \emph{canonical} character of $B$ is a character $\theta\in\Irr(D\C_G(D))$ lying in a Brauer correspondent of $B$ such that $D\le\Ker(\theta)$ (see \cite[Theorem~9.12]{Navarro}). We define the \emph{extended stabilizer} 
\[\N_G(D)_\theta^*:=\bigl\{g\in\N_G(D):\theta^g\in\{\theta,\overline{\theta}\}\bigr\}.\]
The following results adds some detail to the nilpotent case of \cite[Theorem~1]{MurrayCyclicOdd}.

\begin{Thm}\label{podd}
Let $B$ be a real, nilpotent $p$-block with cyclic defect group $D=\langle u\rangle$ and $p>2$. Let $\theta\in\Irr(\C_G(D))$ be a canonical character of $B$ and set $T:=\N_G(D)_\theta^*$. Then one of the following holds:
\begin{enumerate}[(1)]
\item $\overline{\theta}\ne\theta$. All characters in $\Irr(B)$ are real with F-S indicator $\epsilon(\theta^T)$. 

\item $\overline{\theta}=\theta$. The unique non-exceptional character $\chi_0\in\Irr(B)$ is the only real character in $\Irr(B)$ and $\epsilon(\chi_0)=\sgn(\chi_0(u))\epsilon(\theta)$ where $\sgn(\chi_0(u))$ is the sign of $\chi_0(u)$. 
\end{enumerate}
\end{Thm}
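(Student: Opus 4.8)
The plan is as follows. Since $B$ is nilpotent with abelian defect group $D$, Broué--Puig~\cite{BrouePuig} give $\Irr(B)=\{\lambda*\chi_0:\lambda\in\Irr(D)\}$ with all ordinary decomposition numbers $d_{\lambda*\chi_0,\phi}$ equal to $\lambda(1)=1$, where $\IBr(B)=\{\phi\}$; in particular every $\chi\in\Irr(B)$ has degree $\phi(1)$. By the Thompson--Willems theorem (\cite[Theorem~2.8]{WillemsDual}), exactly as in the second half of the proof of Theorem~A, all real characters in $\Irr(B)$ share one common F-S indicator. So in each case it suffices to (i) determine which $\lambda*\chi_0$ are real, and (ii) pin down this common indicator.

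For (i) observe first that $T=\N_G(D)_\theta^*=\N_G(D,b_D)^*$, because the canonical character is a complete invariant of a Brauer correspondent of $B$ in $\C_G(D)=D\C_G(D)$ and because $\overline{b_D}$ has canonical character $\overline\theta$. Since $B$ is nilpotent, $\N_G(D,b_D)=D\C_G(D)=\C_G(D)$, so $T/\C_G(D)$ embeds into $\Aut(D)$, which is cyclic as $p>2$; its only subgroup of order $2$ is generated by the inversion $\iota$ of $D$. If $\overline\theta=\theta$, i.e.\ $\overline{b_D}=b_D$, then $T=\N_G(D,b_D)=\C_G(D)$, so the element $t$ of Theorem~A is trivial and only $\lambda=1$ survives: $\chi_0$ is the unique real character. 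If $\overline\theta\neq\theta$, then $(D,b_D)$ and $(D,\overline{b_D})$ are $\N_G(D)$-conjugate maximal $B$-Brauer pairs, so $b_D^t=\overline{b_D}$ for some $t\in\N_G(D)$; then $|T:\C_G(D)|=2$ and the image of $t$ in $\Aut(D)$ is $\iota$. Thus $t$ inverts $D$, $\lambda^t=\overline\lambda$ holds for every $\lambda\in\Irr(D)$, and by Theorem~A every character of $B$ is real.

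For (ii) in the case $\overline\theta=\theta$ I apply \autoref{lembrauer} to the subsection $(u,b_u)$ with $b_u=b_D$ and $\IBr(b_u)=\{\phi\}$. On the left only $\chi_0$ contributes, and on the right $b_D$ is a real, nilpotent block of $\C_G(D)$ with \emph{central} cyclic defect group $D$; running through (i) again for $b_D$ shows that $\theta$ (the unique character with $D\le\Ker\theta$, hence the $p$-rational one) is the only real character of $b_D$, with $\theta(u)=\theta(1)=\phi(1)$. Hence \autoref{lembrauer} reads $\epsilon(\chi_0)\,d^u_{\chi_0,\phi}=\epsilon(\theta)$, so $d^u_{\chi_0,\phi}=\pm1$; on the other hand Brauer's second main theorem expresses $\chi_0(u)$ as $\phi(1)$ times a sum of Galois conjugates of $d^u_{\chi_0,\phi}$, which — $d^u_{\chi_0,\phi}$ being rational — is a positive integer multiple of it (in particular $\chi_0(u)\neq0$). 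Thus $\sgn(\chi_0(u))=d^u_{\chi_0,\phi}$ and $\epsilon(\chi_0)=\sgn(\chi_0(u))\,\epsilon(\theta)$.

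The case $\overline\theta\neq\theta$ is the essential obstacle. Here $b_u\in\{b_D,\overline{b_D}\}$ is \emph{not} real and, more generally, by the argument in (i) no proper $B$-subpair is real-self-conjugate, so \autoref{lembrauer} and the real form of Brauer's second main theorem only ever yield $0=0$: the realness of $B$ is carried entirely by $t$. To extract the common indicator $\epsilon_B$ I would pass to the nilpotent block $\tilde B$ of the extended inertial group $T=\N_G(D,b_D)^*$ covering $b_D$; then $\Irr(\tilde B)=\{\psi^T:\psi\in\Irr(b_D)\}$, the induced character $\theta^T\in\Irr(\tilde B)$ is irreducible and real, and applying (i) to $\tilde B$ shows that all of $\Irr(\tilde B)$ is real with common indicator $\epsilon(\theta^T)$. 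It then remains to prove $\epsilon_B=\epsilon(\theta^T)$, i.e.\ that this common indicator is preserved under the extended Brauer correspondence $\tilde B\leftrightarrow B$; for real blocks this is governed by Murray's theory of real subpairs (\cite{MurraySubpairs,MurrayCyclicOdd}), and the decisive point — where I expect the real work to lie — is to track the single residual sign and show it equals $\epsilon(\theta^T)$ rather than $-\epsilon(\theta^T)$.
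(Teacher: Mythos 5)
Your argument for case (2) ($\overline\theta=\theta$) is essentially the paper's proof: you apply \autoref{lembrauer} to the subsection $(u,b_D)$, use $p$-rationality of $\chi_0$ to get $d^u_{\chi_0,\phi}=\pm1$, and relate this sign to $\sgn(\chi_0(u))$ via Brauer's second main theorem together with Galois conjugacy of the $|\N_G(D):\N_G(D)_\theta|$ local decomposition numbers. The computation $\epsilon(\chi_0)d^u_{\chi_0,\phi}=\epsilon(\theta)$ and the identification of the sign are correct and match the paper. Your preliminary remark that $\N_G(D)_\theta^*=\N_G(D,b_D)^*$, that $\Aut(D)$ is cyclic for $p>2$, and the resulting dichotomy on which $\lambda*\chi_0$ are real, are also all fine and consistent with Theorem~A.

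However, case (1) ($\overline\theta\ne\theta$) has a genuine gap. You establish that all of $\Irr(B)$ is real and, via Thompson--Willems, shares a single F-S indicator $\epsilon_B$; but you never actually compute $\epsilon_B$. Your plan is to reduce to the block $\tilde B$ of $T$ covering $b_D$, show $\epsilon(\theta^T)$ is the common indicator of $\Irr(\tilde B)$, and then ``track the single residual sign'' under the extended Brauer correspondence $\tilde B\leftrightarrow B$ --- and you explicitly say this last comparison is ``where the real work lies'' and leave it unproved. Note also that applying the theorem itself to $\tilde B$ to conclude its common indicator is $\epsilon(\theta^T)$ would be circular unless you set up a genuine reduction. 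The paper does not attempt such a reduction at all: it simply invokes \cite[Theorem~1(v)]{MurrayCyclicOdd}, which states directly that the common F-S indicator of the nilpotent block $B$ equals the Gow indicator of the canonical character $\theta$ with respect to $T$, and then identifies that Gow indicator with $\epsilon(\theta^T)$. That citation is precisely the ingredient your sketch is missing, and without it case (1) is not proved.
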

\begin{proof}
Let $b_D$ be a Brauer correspondent of $B$ in $\C_G(D)$ containing $\theta$. Then $T=\N_G(D,b_D)^*$. If $\overline{\theta}\ne\theta$, then $T$ inverts the elements of $D$ since $p>2$. Thus, Theorem~A implies that all characters in $\Irr(B)$ are real. By \cite[Theorem~1(v)]{MurrayCyclicOdd}, the common F-S indicator is the Gow indicator of $\theta$ with respect to $T$. This is easily seen to be $\epsilon(\theta^T)$ (see \cite[after Eq. (2)]{MurrayCyclicOdd}).

Now assume that $\overline{\theta}=\theta$. Here Theorem~A implies that the unique $p$-rational character $\chi_0\in\Irr(B)$ is the only real character. In particular, $\chi_0$ must be the unique non-exceptional character. 
Note that $(u,b_D)$ is a $B$-subsection and $\IBr(b_D)=\{\phi\}$. Since $\chi_0$ is $p$-rational, $d^u_{\chi_0\phi}=\pm1$. Since all Brauer correspondents of $B$ in $\C_G(u)$ are conjugate under $\N_G(D)$, the generalized decomposition numbers are Galois conjugate, in particular $d^u_{\chi_0\phi}$ does not depend on the choice of $b_D$. 
Hence, 
\[\chi_0(u)=|\N_G(D):\N_G(D)_\theta|d^u_{\chi_0\phi}\phi(1)\] 
and $d^u_{\chi_0\phi}=\sgn(\chi_0(u))$.
Moreover, $\theta$ is the unique non-exceptional character of $b_D$ and $\theta(u)=\theta(1)$. By \autoref{lembrauer}, we obtain 
\[\epsilon(\chi_0)=\sgn(\chi_0(u))\sum_{\chi\in\Irr(B)}\epsilon(\chi)d^u_{\chi\phi}=\frac{\sgn(\chi_0(u))}{\phi(1)}\sum_{\psi\in\Irr(b_D)}\epsilon(\psi)\psi(u)=\sgn(\chi_0(u))\epsilon(\theta).\qedhere\] 
\end{proof}

If $B$ is a nilpotent block with canonical character $\theta\ne\overline{\theta}$, the common F-S indicator of the real characters in $\Irr(B)$ is not always $\epsilon(\theta^T)$ as in \autoref{podd}. A counterexample is given by a certain $3$-block of $G=\mathtt{SmallGroup}(288,924)$ with defect group $D\cong C_3\times C_3$. 

We now restrict ourselves to $2$-blocks. 
Héthelyi--Horváth--Szabó~\cite{HHS} introduced four conjectures, which are real versions of Brauer's conjecture, Olsson's conjecture and Eaton's conjecture. We only state the strongest of them, which implies the remaining three. Let $D^{(0)}:=D$ and $D^{(k+1)}:=[D^{(k)},D^{(k)}]$ for $k\ge 0$ be the members of the derived series of $D$. 

\begin{Con}[Héthelyi--Horváth--Szabó]\label{conHHS}
Let $B$ be a $2$-block with defect group $D$. For every $h\ge 0$, the number of real characters in $\Irr(B)$ of height $\le h$ is bounded by the number of elements of $D/D^{(h+1)}$ which are real in $\N_G(D)/D^{(h+1)}$.
\end{Con}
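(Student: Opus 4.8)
The plan is to treat the conjecture in two stages, reflecting the fact that it interpolates between several problems: for $D^{(h+1)}=1$ it is the real analogue of Brauer's $k(B)$-conjecture, for $h=0$ the real analogue of Olsson's conjecture, and in general the real analogue of Eaton's conjecture. For an arbitrary $2$-block I expect that a proof must follow the usual pattern of the ordinary reductions --- a Fong--Reynolds step, passage to covering blocks, and reduction through central extensions and automorphisms --- carried out so as to keep track simultaneously of heights, of real-valuedness, and of the quotients $D/D^{(h+1)}$ of the defect group, in the spirit of Navarro--Tiep's treatment of real refinements of the local--global conjectures; one would then appeal to the classification, to An--Eaton's work on Eaton's conjecture for quasisimple groups, and to the known Frobenius--Schur data for groups of Lie type and for sporadic groups. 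The genuine obstacle is this real reduction theorem itself, since reality and the fine height filtration are awkward to transport through Clifford theory; this is why the statement is recorded only as a conjecture, and I do not carry the reduction out here.

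What can be established outright, and is the contribution relevant to this paper, is the nilpotent case, where Theorem~A does the essential work. So assume that $B$ is nilpotent with defect group $D$, let $b_D$ be a Brauer correspondent of $B$ in $D\C_G(D)$, and write $\N_G(D,b_D)^*/D\C_G(D)=\langle tD\C_G(D)\rangle$. Summing the height-by-height count of Theorem~A over $0\le h'\le h$, the number of real characters in $\Irr(B)$ of height $\le h$ equals
\[\bigl|\{\lambda\in\Irr(D):\lambda(1)\le p^h,\ \lambda^t=\overline\lambda\}\bigr|\]
(for the principal block this is immediate, as then $\Irr(B)=\Irr(D)$). The first ingredient is the classical fact that if $\lambda\in\Irr(D)$ has $\lambda(1)\le p^h$ then $D^{(h+1)}\le\Ker(\lambda)$ --- equivalently, a $p$-group with a faithful irreducible character of degree at most $p^h$ has derived length at most $h+1$. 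Since $D^{(h+1)}$ is characteristic in $D$ and hence normal in $\N_G(D)$, we may pass to $\overline D:=D/D^{(h+1)}$, $\overline N:=\N_G(D)/D^{(h+1)}$ and $\overline t:=tD^{(h+1)}\in\overline N$; every $\lambda$ counted above lies in $\Irr(\overline D)$, and discarding the degree constraint the count is at most $\bigl|\{\lambda\in\Irr(\overline D):\lambda^{\overline t}=\overline\lambda\}\bigr|$.

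It therefore suffices to prove
\[\bigl|\{\lambda\in\Irr(\overline D):\lambda^{\overline t}=\overline\lambda\}\bigr|\ \le\ \bigl|\{x\in\overline D:x\text{ is conjugate to }x^{-1}\text{ in }\overline N\}\bigr|,\]
the right-hand side being exactly the bound of the conjecture. The left-hand side is evaluated by the twisted form of Brauer's permutation lemma: for any automorphism $\sigma$ of a finite group $Q$, the number of $\lambda\in\Irr(Q)$ with $\lambda^\sigma=\overline\lambda$ equals the number of conjugacy classes $C$ of $Q$ with $C^\sigma=C^{-1}$. Taking $Q=\overline D$ and $\sigma=\overline t$ gives $\bigl|\{\lambda\in\Irr(\overline D):\lambda^{\overline t}=\overline\lambda\}\bigr|=\bigl|\{C\in\Cl(\overline D):C^{\overline t}=C^{-1}\}\bigr|$. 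Finally, if $C$ is an $\overline D$-class with $C^{\overline t}=C^{-1}$ and $x\in C$, then $x^{-1}\in C^{-1}=C^{\overline t}$, so $x$ is conjugate to $x^{-1}$ already inside $\overline D\langle\overline t\rangle\le\overline N$; hence the distinct classes $C$ with $C^{\overline t}=C^{-1}$ are contained in the set of $\overline N$-real elements of $\overline D$, and as each contains at least one element the required inequality follows.

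The remaining points are purely formal: that $tD\C_G(D)$ has order at most $2$, because $\N_G(D,b_D)=D\C_G(D)$ for a nilpotent block; and the verification that the $\N_G(D)$-conjugation action attached to $t$ in Theorem~A is the one used above. Neither presents a difficulty, so the conjecture holds for every nilpotent $2$-block. The whole weight of the general case therefore rests on the reduction described in the first paragraph.
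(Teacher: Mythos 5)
Your nilpotent-case argument is essentially identical to the paper's proof of \autoref{HHS}: Theorem~A, the containment $D^{(h+1)}\le\Ker(\lambda)$ for $\lambda(1)\le p^h$ (Isaacs, Theorem~5.12), Brauer's permutation lemma to convert characters with $\lambda^t=\overline\lambda$ into classes $K$ with $K^t=K^{-1}$, and the passage to $\N_G(D)$ via $\N_G(D,b_D)^*\le\N_G(D)$ (your only additions being the explicit class-to-element counting step and the observation that the inertial quotient of a nilpotent block is trivial; you should also state the trivial reduction to real $B$ before invoking Theorem~A, as the paper does). For the general statement the paper offers no proof either --- it is recorded precisely as a conjecture --- so your first paragraph's acknowledged sketch is consistent with the paper, and no gap arises from leaving it open.
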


A conjugacy class $K$ of $G$ is called \emph{real} if $K=K^{-1}:=\{x^{-1}:x\in K\}$.
A conjugacy class $K$ of a normal subgroup $N\unlhd G$ is called \emph{real under} $G$ if there exists $g\in G$ such that $K^g=K^{-1}$. 

\begin{Prop}\label{HHS}
Let $B$ be a nilpotent $2$-block with defect group $D$ and Brauer correspondent $b_D$ in $D\C_G(D)$. Then the number of real characters in $\Irr(B)$ of height $\le h$ is bounded by the number of conjugacy classes of $D/D^{(h+1)}$ which are real under $\N_G(D,b_D)^*/D^{(h+1)}$.
In particular, \autoref{conHHS} holds for $B$.  
\end{Prop}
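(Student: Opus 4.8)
The plan is to deduce this from Theorem~A by a counting argument that first replaces $\Irr(B)$ by $\Irr(D/D^{(h+1)})$ and then trades characters for conjugacy classes. Write $\bar D:=D/D^{(h+1)}$ and let $t\in\N_G(D)$ be as in Theorem~A, so that $\N_G(D,b_D)^*/D\C_G(D)=\langle tD\C_G(D)\rangle$. Summing Theorem~A over heights, the number of real characters in $\Irr(B)$ of height $\le h$ equals the number of $\lambda\in\Irr(D)$ with $\lambda(1)\le 2^h$ and $\lambda^t=\overline\lambda$. First I would invoke the standard fact that every $\lambda\in\Irr(D)$ with $\lambda(1)\le 2^h$ satisfies $D^{(h+1)}\le\Ker(\lambda)$ (a property of $p$-groups; it follows, e.g., from $p$-groups being monomial). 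Hence every such $\lambda$ is inflated from a character $\mu\in\Irr(\bar D)$; and since $D^{(h+1)}$ is characteristic in $D$, the element $t$ normalizes it and acts on $\bar D$, with inflation compatible both with this action and with complex conjugation. Therefore the number of real characters in $\Irr(B)$ of height $\le h$ is at most $|\{\mu\in\Irr(\bar D):\mu^{\bar t}=\overline\mu\}|$, where $\bar t:=tD^{(h+1)}$.

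Next I would apply Brauer's permutation lemma. The permutation $\mu\mapsto\overline{\mu^{\bar t}}$ of $\Irr(\bar D)$ is the action of the pair formed by the automorphism of $\bar D$ induced by $t$ and complex conjugation, the latter regarded as the Galois automorphism of $\QQ_{|\bar D|}$ that inverts roots of unity; the corresponding permutation of the conjugacy classes of $\bar D$ is $K\mapsto(K^{\bar t})^{-1}$. By Brauer's permutation lemma these two permutations have the same number of fixed points, so $|\{\mu\in\Irr(\bar D):\mu^{\bar t}=\overline\mu\}|=|\{K\in\Cl(\bar D):K^{\bar t}=K^{-1}\}|$. Every class $K$ on the right is real under $\N_G(D,b_D)^*/D^{(h+1)}$, with $\bar t$ serving as the inverting element, so this number is at most the number of conjugacy classes of $\bar D$ that are real under $\N_G(D,b_D)^*/D^{(h+1)}$. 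Chaining these inequalities yields the first assertion.

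For the final sentence I would argue that, since $\N_G(D,b_D)^*\le\N_G(D)$, every conjugacy class of $\bar D$ which is real under $\N_G(D,b_D)^*/D^{(h+1)}$ consists of elements that are real in $\N_G(D)/D^{(h+1)}$; as distinct classes are disjoint, the number of such classes is at most the number of elements of $D/D^{(h+1)}$ that are real in $\N_G(D)/D^{(h+1)}$. Combined with the first assertion, this is exactly \autoref{conHHS} for $B$.

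Apart from Brauer's permutation lemma, the only external ingredient is the statement on kernels of low-degree characters of $p$-groups, which is classical. The step requiring the most care is the bookkeeping in the application of Brauer's permutation lemma twisted by complex conjugation, so that complex conjugation is made to act on conjugacy classes by inversion; I also note that although Theorem~A supplies only the single element $t$ whereas realness under a group is an existential condition, this gap only weakens the bound and hence causes no trouble.
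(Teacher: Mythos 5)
Your argument is correct and follows essentially the same route as the paper's proof: reduce to the real case, use the Broué--Puig parametrization from Theorem~A to translate ``real of height $\le h$'' into $\lambda\in\Irr(D)$ with $\lambda(1)\le 2^h$ and $\lambda^t=\overline\lambda$, observe that such $\lambda$ are inflated from $D/D^{(h+1)}$ (the paper cites Isaacs, Theorem~5.12, for the same fact you derive from monomiality), apply Brauer's permutation lemma twisted by complex conjugation, and finish via $\N_G(D,b_D)^*\le\N_G(D)$. Your explicit remark that the equality produced by Theorem~A (for the fixed coset representative $t$) only gives a lower estimate for the number of classes real under the whole group $\N_G(D,b_D)^*/D^{(h+1)}$, which is nonetheless enough because the statement is an inequality, is a point the paper leaves implicit; the only thing you omit is the one-line reduction to $B$ real, which is harmless.
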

\begin{proof}
We may assume that $B$ is real. As in the proof of Theorem~A, we fix some $2$-rational real character $\chi_0\in\Irr(B)$ of height $0$. Now $\lambda*\chi_0$ has height $\le h$ if and only if $\lambda(1)\le p^h$ for $\lambda\in\Irr(B)$. By \cite[Theorem~5.12]{Isaacs}, the characters of degree $\le p^h$ in $\Irr(D)$ lie in $\Irr(D/D^{(h+1)})$. By Theorem~A, $\lambda*\chi_0$ is real if and only if $\lambda^t=\overline{\lambda}$. By Brauer's permutation lemma (see \cite[Theorem~2.3]{Navarro2}), the number of those characters $\lambda$ coincides with the number of conjugacy classes $K$ of $D/D^{(h+1)}$ such that $K^t=K^{-1}$. 
Now \autoref{conHHS} follows from $\N_G(D,b_D)^*\le\N_G(D)$.
\end{proof}

\section{Extended defect groups}\label{secext}

We continue to assume that $p=2$. As usual we choose a complete discrete valuation ring $\mathcal{O}$ such that $F:=\mathcal{O}/J(\mathcal{O})$ is an algebraically closed field of characteristic $2$. Let $\Cl(G)$ be the set of conjugacy classes of $G$. For $K\in\Cl(G)$ let $K^+:=\sum_{x\in K}x\in\Z(FG)$ be the class sum of $K$. 
We fix a $2$-block $B$ of $FG$ with block idempotent $1_B=\sum_{K\in\Cl(G)}a_KK^+$ where $a_K\in F$. The central character of $B$ is defined by 
\[\lambda_B:\Z(FG)\to F,\quad K^+\mapsto\Bigl(\frac{|K|\chi(g)}{\chi(1)}\Bigr)^*\] 
where $g\in K$, $\chi\in\Irr(B)$ and $^*$ denotes the canonical reduction $\mathcal{O}\to F$ (see \cite[Chapter~2]{Navarro}).

Since $\lambda_B(1_B)=1$, there exists $K\in\Cl(G)$ such that $a_K\ne 0\ne\lambda_B(K^+)$. We call $K$ a \emph{defect class} of $B$. By \cite[Corollary~3.8]{Navarro}, $K$ consists of elements of odd order. According to \cite[Corollary~4.5]{Navarro}, a Sylow $2$-subgroup $D$ of $\C_G(x)$ where $x\in K$ is a defect group of $B$. 
For $x\in K$ let
\[\C_G(x)^*:=\{g\in G:gxg^{-1}=x^{\pm1}\}\le G\] 
be the \emph{extended centralizer} of $x$.

\begin{Prop}[Gow, Murray]\label{GowEx}
Every real $2$-block $B$ has a real defect class $K$. Let $x\in K$. Choose a Sylow $2$-subgroup $E$ of $\C_G(x)^*$ and put $D:=E\cap\C_G(x)$. Then the $G$-conjugacy class of the pair $(D,E)$ does not depend on the choice of $K$ or $x$.  
\end{Prop}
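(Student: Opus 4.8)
The plan is to establish Proposition~\ref{GowEx} in two stages: first the existence of a real defect class, then the well-definedness of the pair $(D,E)$ up to $G$-conjugacy.

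For the existence part, recall that $B$ real means the block idempotent $1_B$ is fixed by the anti-automorphism $g\mapsto g^{-1}$ of $FG$, equivalently $a_{K^{-1}}=a_K$ for all $K\in\Cl(G)$. The central character $\lambda_B$ is likewise compatible with inversion in the sense that $\lambda_B((K^{-1})^+)=\lambda_B(K^+)$, since $K^{-1}$ has the same size as $K$ and $\chi(x^{-1})=\overline{\chi(x)}$ reduces appropriately (one has to be a bit careful here: the values are algebraic integers and $^*$ commutes with the relevant Galois action, or one argues via $\chi=\overline{\chi}$ on $B$). Consider the set $\mathcal{D}$ of defect classes, i.e. those $K$ with $a_K\ne0\ne\lambda_B(K^+)$. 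This set is nonempty and, by the above, stable under $K\mapsto K^{-1}$. If some $K\in\mathcal{D}$ were not real, then $K$ and $K^{-1}$ are distinct defect classes. The key point is that a defect group computed from $K$ and one computed from $K^{-1}$ coincide: if $x\in K$ then $x^{-1}\in K^{-1}$ and $\C_G(x)=\C_G(x^{-1})$, so the same Sylow $2$-subgroup $D$ works for both. That alone does not force $K=K^{-1}$, so I expect the real argument uses a counting/parity input: among all defect classes, one counts those that are real versus those that come in pairs $\{K,K^{-1}\}$. The cleanest route is Murray's: the Reynolds ideal or the space spanned by class sums of defect classes carries the inversion involution, and the ``real dimension'' is detected by the Frobenius--Schur type invariant $\sum_{\chi\in\Irr(B)}\epsilon(\chi)$, which for a real $2$-block is nonzero (indeed positive, by Gow's theorem that a real $2$-block has a real character of height $0$ with $\epsilon=1$); this nonvanishing forces a fixed point of the involution on the set of defect classes. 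I would cite \cite{Gow} and \cite{MurrayCyclic2}/\cite{MurraySubpairs} for this and only sketch it.

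For the well-definedness part, fix a real defect class $K$ and $x\in K$. First I show the conjugacy class of $(D,E)$ is independent of the choices of $E$ and $x$ for this fixed $K$. Any two Sylow $2$-subgroups $E,E'$ of $\C_G(x)^*$ are conjugate by an element of $\C_G(x)^*$, and such an element normalizes nothing problematic: conjugating $E$ to $E'$ carries $E\cap\C_G(x)$ to $E'\cap\C_G(x)$ because $\C_G(x)\unlhd\C_G(x)^*$ (index $\le2$). So $(D,E)$ is well-defined up to $\C_G(x)^*$-conjugacy, hence up to $G$-conjugacy. Replacing $x$ by another element $x'\in K$, write $x'=x^g$; then $\C_G(x')^*=(\C_G(x)^*)^g$ and the whole construction is transported by $g$, giving a $G$-conjugate pair.

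The remaining, and I expect hardest, step is independence of the choice of the real defect class $K$ itself. Here I would use that all defect groups of $B$ are $G$-conjugate, so after conjugating we may assume a common $D\in\Syl_2(\C_G(x))$ arising from both $K$ and $K'$, with $x\in K$, $x'\in K'$ elements of odd order centralizing $D$. The pair $(u,b_D)$ with $u$ of odd order and $b_D$ the Brauer correspondent in $\C_G(u)$ is essentially a subpair datum, and the point is that $x,x'\in\Z(\C_G(D))$ both lie in Brauer correspondents of $B$ in $D\C_G(D)$; the theory of subpairs (Alperin--Broué) pins down $D\C_G(D)$ and its relevant block up to $\N_G(D)$-conjugacy, and then $E$ is recovered as a Sylow $2$-subgroup of the extended inertial/centralizer group, which by Proposition~\ref{GowEx}'s setup is $\N_G(D,b_D)^*$-equivariant. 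Concretely: show $E/D$ embeds into $\N_G(D,b_D)^*/D\C_G(D)$ compatibly, and that this quotient is a $2$-group of order $\le2$ determined by $B$ and $D$ alone (whether $\overline{b_D}=b_D$ or not); then $E$ is determined up to conjugacy as the preimage of this group, split according to whether the extension is split, which is exactly what the ``extended defect group'' encodes. The main obstacle is making the transition from the two a priori unrelated real defect classes to a common local datum rigorous — this is where one genuinely needs Murray's subpair machinery \cite{MurraySubpairs}, and I would lean on it rather than reprove it, concluding that the $G$-class of $(D,E)$ is an invariant of $B$.
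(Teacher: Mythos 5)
Your existence sketch and your easy uniqueness step (varying $E$ and $x$ for a \emph{fixed} real defect class $K$) are fine and match the spirit of the paper; the paper simply cites \cite[Theorem~5.5]{Gow} for existence and treats the fixed-$K$ uniqueness as implicit. The problem is the step you correctly flag as the hardest, independence of the choice of $K$, where your proposal has a genuine gap rather than a proof.

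Your reduction to a common defect group $D$ and two odd-order elements $x,x'\in\Z(\C_G(D))$ does not put you in a position to compare the two $E$'s, because $E$ is defined as a Sylow $2$-subgroup of $\C_G(x)^*$, which depends on $x$ and not just on $D$. You then propose to recover $E$ from the abstract extension $\N_G(D,b_D)^*/D\C_G(D)$ and "whether the extension splits", but that is circular: the relation $\N_G(D,b_D)^*=\N_G(D,b_D)E$ (Proposition~\ref{GowMurray}(i) in the paper) is a \emph{consequence} of the defect-pair machinery being well-defined, and moreover knowing the abstract index-$2$ extension does not pin down the $G$-conjugacy class of the subgroup $E\le G$, which is exactly what is claimed. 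The paper takes a different route: using \cite[Corollary~2.2]{GowExtended} one may conjugate so that the \emph{same} $E$ is a Sylow $2$-subgroup of both $\C_G(x)^*$ and $\C_G(y)^*$, reducing the problem to comparing two index-$2$ subgroups $D_x,D_y$ of one group $E$; then \cite[Proposition~14]{MurrayStronglyReal} identifies $D_x\langle e_x\sigma\rangle$ and $D_y\langle e_y\sigma\rangle$ (inside $G\times\Sigma$ with $\Sigma=\langle\sigma\rangle$ acting by inversion) as $G\times\Sigma$-vertices of the trivial source module $F\cdot 1_B$, hence $G$-conjugate, which forces $D_x$ and $D_y$ to be conjugate in $\N_G(E)$. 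This vertex argument for $1_B$ in the $F[G\times\Sigma]$-module $FG$ is the essential input your proposal is missing; without it (or an equivalent), "lean on Murray's subpair machinery" amounts to assuming the conclusion.
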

\begin{proof}
For the principal block (which is always real since it contains the trivial character), $K=\{1\}$ is a real defect class and $E=D$ is a Sylow $2$-subgroup of $G$. Hence, the uniqueness follows from Sylow's theorem. Now suppose that $B$ is non-principal.
The existence of $K$ was first shown in \cite[Theorem~5.5]{Gow}. Let $L$ be another real defect class of $B$ and choose $y\in L$. By \cite[Corollary~2.2]{GowExtended}, we may assume after conjugation that $E$ is also a Sylow $2$-subgroup of $\C_G(y)^*$. 
Let $D_x:=E\cap\C_G(x)$ and $D_y:=E\cap\C_G(y)$. We may assume that $|E:D_x|=2=|E:D_y|$ (cf. the remark after the proof).

We now introduce some notation in order to apply \cite[Proposition~14]{MurrayStronglyReal}. 
Let $\Sigma=\langle\sigma\rangle\cong C_2$. We consider $FG$ as an $F[G\times\Sigma]$-module where $G$ acts by conjugation and $g^\sigma=g^{-1}$ for $g\in G$ (observe that these actions indeed commute). For $H\le G\times\Sigma$ let \[\mathrm{Tr}^{G\times\Sigma}_H:(FG)^H\to(FG)^{G\times\Sigma},\ \alpha\mapsto\sum_{x\in \mathcal{R}}\alpha^x\] 
be the \emph{relative trace} with respect to $H$, where $\mathcal{R}$ denotes a set of representatives of the right cosets of $H$ in $G\times\Sigma$. 
By \cite[Proposition~14]{MurrayStronglyReal}, we have $1_B\in\mathrm{Tr}^{G\times\Sigma}_{E_x}(FG)$ where $E_x:=D_x\langle e_x\sigma\rangle$ for some $e_x\in E\setminus D_x$. By the same result we also obtain that $D_y\langle e_y\sigma\rangle$ with $e_y\in E\setminus D_y$ is $G$-conjugate to $E_x$. This implies that $D_y$ is conjugate to $D_x$ inside $\N_G(E)$. In particular, $(D_x,E)$ and $(D_y,E)$ are $G$-conjugate as desired.
\end{proof}

\begin{Def}\label{defdefectpair}
In the situation of \autoref{GowEx} we call $E$ an \emph{extended defect group} and $(D,E)$ a \emph{defect pair} of $B$.
\end{Def}

We stress that real $2$-blocks can have non-real defect classes and non-real blocks can have real defect classes (see \cite[Theorem~3.5]{GowMurray}). 

It is easy to show that non-principal real $2$-blocks cannot have maximal defect (see \cite[Problem~3.8]{Navarro}). In particular, the trivial class cannot be a defect class and consequently, $|E:D|=2$ in those cases. For non-real blocks we define the extended defect group by $E:=D$ for convenience. 
Every given pair of $2$-groups $D\le E$ with $|E:D|=2$ occurs as a defect pair of a real (nilpotent) block. To see this, let $Q\cong C_3$ and $G=Q\rtimes E$ with $\C_E(Q)=D$. Then $G$ has a unique non-principal block with defect pair $(D,E)$. 

We recall from \cite[p. 49]{Isaacs} that
\begin{equation}\label{indformula}
\sum_{\chi\in\Irr(G)}\epsilon(\chi)\chi(g)=|\{x\in G:x^2=g\}|
\end{equation}
for all $g\in G$.
The following proposition provides some interesting properties of defect pairs. 

\begin{Prop}[Gow, Murray]\label{GowMurray}
Let $B$ be a real $2$-block with defect pair $(D,E)$. 
Let $b_D$ be a Brauer correspondent of $B$ in $D\C_G(D)$. Then the following holds:
\begin{enumerate}[(i)]
\item $\N_G(D,b_D)^*=\N_G(D,b_D)E$. In particular, $b_D$ is real if and only if $E=D\C_E(D)$.

\item For $u\in D$, we have $\sum_{\chi\in\Irr(B)}\epsilon(\chi)\chi(u)\ge 0$ with strict inequality if and only if $u$ is $G$-conjugate to $e^2$ for some $e\in E\setminus D$. In particular, $E$ splits over $D$ if and only if $\sum_{\chi\in\Irr(B)}\epsilon(\chi)\chi(1)>0$.  

\item $E/D'$ splits over $D/D'$ if and only if all height zero characters in $\Irr(B)$ have non-negative F-S indicator.
\end{enumerate}
\end{Prop}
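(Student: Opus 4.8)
The plan is to prove (i), (ii) and (iii) in turn. Throughout I assume $B$ is non-principal, so $|E:D|=2$, and I work with the $F[G\times\langle\sigma\rangle]$-module $FG$ in which $G$ acts by conjugation and $\sigma$ by inversion (as in the proof of \autoref{GowEx}): realness of $B$ means $1_B$ is $\sigma$-fixed, and by Murray's relative-trace criterion $1_B\in\mathrm{Tr}^{G\times\langle\sigma\rangle}_{E_x}(FG)$ for a subgroup $E_x\le G\times\langle\sigma\rangle$ isomorphic to $E$ with $E_x\cap G=D$; this ``$\sigma$-equivariant defect group'' is the bridge to the defect pair $(D,E)$. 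Besides this I use Brauer's second main theorem, \autoref{lembrauer} and \eqref{indformula}.

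For (i), the local input — from Murray's work on strongly real classes — is that an element of $E$ inverting a real defect-class element $x$ of $B$ acts on the local block $b_D$ as complex conjugation: since $x$ centralises $D$ it lies in $D\C_G(D)$ and is still a defect-class element there, so $b_D^e=\overline{b_D}$ for every $e\in E\setminus D$. As $D\C_E(D)=E\cap D\C_G(D)$ fixes $b_D$, this gives $E\subseteq\N_G(D,b_D)^*$, hence $\N_G(D,b_D)E\subseteq\N_G(D,b_D)^*$. The reverse inclusion is automatic: the map on $\N_G(D,b_D)^*$ sending $g$ to $0$ if $b_D^g=b_D$ and to $1$ otherwise has kernel $\N_G(D,b_D)$ and image in $\ZZ/2\ZZ$, so $[\N_G(D,b_D)^*:\N_G(D,b_D)]\le 2$, and if this index is $2$ then $b_D\ne\overline{b_D}$ and the $e$ above generates the quotient. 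For the ``in particular'': $b_D$ is real $\iff\N_G(D,b_D)^*=\N_G(D,b_D)\iff E\subseteq\N_G(D,b_D)$; if $E=D\C_E(D)$, choosing $e\in\C_E(D)\setminus D$ gives $b_D=b_D^e=\overline{b_D}$, while the converse — that realness of $b_D$ forces $\C_E(D)\not\subseteq D$ — is the one place needing the structure theory rather than just the index count.

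For (ii), write $\chi(u)=\sum_b\chi^{(u,b)}(u)=\sum_b\sum_{\phi\in\IBr(b)}d^u_{\chi\phi}\phi(1)$ by Brauer's second main theorem, the sum running over the $B$-subsections $(u,b)$ with $b$ a block of $\C_G(u)$. Interchanging summations, applying \autoref{lembrauer} and using $\psi(1)=\sum_\phi d_{\psi\phi}\phi(1)$ yields
\[\sum_{\chi\in\Irr(B)}\epsilon(\chi)\chi(u)=\sum_b\,\sum_{\psi\in\Irr(b)}\epsilon(\psi)\psi(u).\]
The inner sum vanishes when $b$ is not real, and otherwise it is again the left-hand quantity for the block $b$ of $\C_G(u)$ at the central $2$-element $u$; so one inducts on $|G|$, reducing $(G,B,u)$ to $(\C_G(u),b,u)$, the base case $\C_G(u)=G$ being handled directly through the relative-trace framework above (which makes the sign manifest). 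The condition ``$u$ is $G$-conjugate to $e^2$ for some $e\in E\setminus D$'' then emerges by matching the extended centralisers of the subsections with the defect pair. The last claim is the case $u=1$: since $|E:D|=2$, one has $1=e^2$ for some $e\in E\setminus D$ exactly when $\langle e\rangle\cong C_2$ is a complement to $D$ in $E$, i.e. when $E$ splits over $D$.

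Finally (iii) is the height-zero refinement of (ii), essentially due to Murray: the height-zero Frobenius--Schur data of $B$ is governed by the ``$D/D'$-reduced'' defect pair $(D/D',E/D')$ — made precise by reducing to the case $D'\unlhd G$ and passing to the block of $G/D'$ dominated by $B$ — and the $u=1$ instance of (ii) for that reduction shows a height-zero character of indicator $-1$ exists precisely when $E/D'$ fails to split over $D/D'$. I expect the main obstacle to be step (ii): turning the local sums $\sum_{\psi\in\Irr(b)}\epsilon(\psi)\psi(u)$ into a clean statement about $(D,E)$ — whether via the induction or directly through the $G\times\langle\sigma\rangle$-equivariant theory — is where the extended defect group has to be tracked across the subsections. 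Once (ii) is in hand, (iii) is largely formal, and in (i) only the converse half of the ``in particular'' goes beyond the routine index-$2$ argument.
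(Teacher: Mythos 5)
The paper does not supply an argument of its own here: the proposition is a compilation of results of Gow and Murray, and the proof consists of the citations \cite[Lemma~1.8]{MurraySubpairs} together with \cite[Theorem~1.4]{MurrayCyclic2} for (i), \cite[Lemma~1.3]{MurraySubpairs} for (ii), and \cite[Theorem~5.6]{Gow} for (iii). Your reconstruction is in the right spirit --- the $F[G\times\langle\sigma\rangle]$-module and relative-trace machinery is indeed Murray's toolkit, and the identity $\sum_{\chi\in\Irr(B)}\epsilon(\chi)\chi(u)=\sum_b\sum_{\psi\in\Irr(b)}\epsilon(\psi)\psi(u)$ obtained from Brauer's second main theorem and \autoref{lembrauer} is the correct starting point for (ii) --- but several steps are left as genuine gaps rather than routine verifications.

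In (i), the converse of the ``in particular'' that you flag can actually be finished with a standard structural fact: the inertial quotient $\N_G(D,b_D)/D\C_G(D)$ has order coprime to $p=2$, so if $b_D$ is real then $E\le\N_G(D,b_D)$ forces the $2$-group $E$ into $D\C_G(D)$, and the modular law gives $E=D(E\cap\C_G(D))=D\C_E(D)$; this is the structural input you sensed was needed. In (ii), however, the proposed induction on $|G|$ is only a plan: the transition from $(G,B,u)$ to $(\C_G(u),b,u)$ requires knowing that the real subsection blocks $b$ have defect pair $(\C_D(u),\C_E(u))$ (the content of \cite[Lemmas~2.6, Theorem~2.7]{MurraySubpairs}, used later in \autoref{loc}), and the central base case $u\in\Z(G)$ is precisely where the real work lies; ``made manifest by the relative-trace framework'' is not an argument. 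Finally, part (iii) contains a step that would fail as stated: $D'$ is not normal in $G$ in general, so ``reduce to the case $D'\unlhd G$ and pass to $G/D'$'' is not a legitimate reduction, and there is no block of $G/D'$ dominated by $B$ to pass to. Gow's actual proof of Theorem~5.6 proceeds differently (it combines (ii) at $u=1$ with a count of height-zero characters via the transfer $E\to E/D'$); your outline would need to be replaced rather than filled in at this point.
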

\begin{proof}\hfill
\begin{enumerate}[(i)]
\item See \cite[Lemma~1.8]{MurraySubpairs} and \cite[Theorem~1.4]{MurrayCyclic2}.

\item See \cite[Lemma~1.3]{MurraySubpairs}.

\item See \cite[Theorem~5.6]{Gow}.\qedhere
\end{enumerate}
\end{proof}

The next proposition extends \cite[Lemma~1.3]{MurrayCyclic2}. 

\begin{Cor}\label{cor}
Suppose that $B$ is a $2$-block with defect pair $(D,E)$ where $D$ is abelian. Then $E$ splits over $D$ if and only if all characters in $\Irr(B)$ have non-negative F-S indicator.
\end{Cor}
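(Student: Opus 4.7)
The plan is to reduce the claim to \autoref{GowMurray}(iii) by using two simple observations: the derived subgroup $D'$ is trivial since $D$ is abelian, and every character of $\Irr(B)$ automatically has height zero by the (now proved) Brauer height zero conjecture for abelian defect groups.

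First I would note that because $D$ is abelian, we have $D'=1$, hence $D/D'=D$ and $E/D'=E$. Thus the condition ``$E/D'$ splits over $D/D'$'' appearing in \autoref{GowMurray}(iii) is just the condition ``$E$ splits over $D$''.

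Next, I would invoke the Kessar--Malle theorem (the ``if'' direction of Brauer's height zero conjecture), which asserts that every $\chi\in\Irr(B)$ has height $0$ whenever $B$ has an abelian defect group. This should be cited and applied here. Consequently the set of height zero characters in $\Irr(B)$ coincides with the full set $\Irr(B)$, so the condition ``all height zero characters in $\Irr(B)$ have non-negative F-S indicator'' in \autoref{GowMurray}(iii) becomes equivalent to ``all characters in $\Irr(B)$ have non-negative F-S indicator''.

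Combining these two reductions with \autoref{GowMurray}(iii) yields the desired equivalence immediately. The only substantive ingredient is the abelian defect case of Brauer's height zero conjecture; apart from citing that deep result, the argument is a direct specialization of the previous proposition. If one wishes to avoid relying on Kessar--Malle, one could also remark that for the non-principal real case $|E:D|=2$ ensures no character of maximal defect interferes, but this would at best weaken the statement and the cleanest proof is the one via the height zero conjecture.
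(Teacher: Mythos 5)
Your proof follows essentially the same route as the paper: reduce to \autoref{GowMurray}(iii) by noting $D'=1$ and invoking Kessar--Malle to see that every character of $\Irr(B)$ has height zero. The one omission is that \autoref{GowMurray} is stated only for \emph{real} $2$-blocks, whereas \autoref{cor} does not assume $B$ is real. The paper therefore first dispatches the non-real case: there $E=D$ by convention, so $E$ trivially splits over $D$, and $\epsilon(\chi)=0$ for all $\chi\in\Irr(B)$, so both sides of the equivalence hold. You should add this sentence before applying \autoref{GowMurray}(iii). The closing remark in your proposal about ``avoiding Kessar--Malle via $|E:D|=2$'' does not lead anywhere: knowing $|E:D|=2$ tells you $B$ has positive defect, but says nothing about whether characters of positive height exist, so it cannot replace the height zero input.
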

\begin{proof}
If $B$ is non-real, then $E=D$ splits over $D$ and all characters in $\Irr(B)$ have F-S indicator $0$. Hence, let $\overline{B}=B$. 
By Kessar--Malle~\cite{KessarMalle}, all characters in $\Irr(B)$ have height $0$. Hence, the claim follows from \autoref{GowMurray}(iii).
\end{proof}

\begin{Thm}\label{abel}
Let $B$ be a real, nilpotent $2$-block with defect pair $(D,E)$ where $D$ is abelian. If $E$ splits over $D$, then all real characters in $\Irr(B)$ have F-S indicator $1$. Otherwise exactly half of the real characters have F-S indicator $1$. In either case, Conjecture~B holds for $B$. 
\end{Thm}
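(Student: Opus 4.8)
The plan is to separate the two cases according to whether $E$ splits over $D$, and in each case to combine Theorem~A (which counts the real characters in $\Irr(B)$) with the sign information coming from \autoref{GowMurray} and \autoref{cor}. Throughout we use that, since $B$ is nilpotent with abelian defect group $D$, the bijection $\lambda\mapsto\lambda*\chi_0$ of Broué--Puig is height-preserving and, by Kessar--Malle, \emph{all} characters in $\Irr(B)$ have height $0$; likewise $\Irr(D)$ consists of $|D|$ linear characters. So $|\Irr(B)|=|D|$, and by Theorem~A the number $r$ of real characters in $\Irr(B)$ equals $|\{\lambda\in\Irr(D):\lambda^t=\overline\lambda\}|=|\{\lambda\in\Irr(D):\lambda^t=\lambda^{-1}\}|$, where $t$ generates $\N_G(D,b_D)^*/D\C_G(D)$. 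By Brauer's permutation lemma this is the number of $\langle t\rangle$-orbits of elements of $D$ that are inverted by $t$, i.e.\ the number of $x\in D$ with $x^t=x^{-1}$ (counting each such $x$ once, since $D$ is abelian so its conjugacy classes are singletons).

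First I would dispose of the split case. If $E$ splits over $D$, then by \autoref{cor} all characters in $\Irr(B)$ have non-negative F-S indicator, so every real $\chi\in\Irr(B)$ has $\epsilon(\chi)=1$. This is the first assertion. For Conjecture~B one must exhibit a height-preserving bijection $\Gamma:\Irr(D)\to\Irr(B)$ with $\epsilon(\Gamma(\lambda))=\frac1{|D|}\sum_{e\in E\setminus D}\lambda(e^2)$. Take $\Gamma(\lambda)=\lambda*\chi_0$. When $E$ splits over $D$, pick an involution $e_0\in E\setminus D$; then $E\setminus D=De_0$ and for linear $\lambda$ one computes $\frac1{|D|}\sum_{d\in D}\lambda((de_0)^2)$. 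Writing $e_0de_0^{-1}=d^t$, we get $(de_0)^2=d\cdot d^t$, so the sum is $\frac1{|D|}\sum_{d\in D}\lambda(d)\lambda(d^t)=\frac1{|D|}\sum_{d\in D}\lambda(d)\lambda^t(d)$, which is the inner product $[\lambda,\overline{\lambda^t}]=[\lambda,(\lambda^t)^{-1}]$; this equals $1$ if $\lambda^t=\lambda^{-1}$ (equivalently $\lambda*\chi_0$ real, by Theorem~A) and $0$ otherwise, matching $\epsilon(\lambda*\chi_0)$. So the split case is complete.

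In the non-split case, by \autoref{GowMurray}(ii) we have $\sum_{\chi\in\Irr(B)}\epsilon(\chi)\chi(1)=0$, and since all heights are $0$ the degrees $\chi(1)$ all have the same $p'$-part times the same power of $p$; more precisely $\chi(1)/\chi_0(1)$ is a $p'$-number, but in fact for nilpotent blocks with abelian defect all irreducible characters have the \emph{same} degree, so $\sum_{\chi\in\Irr(B)}\epsilon(\chi)=0$. Hence the number of real $\chi$ with $\epsilon(\chi)=1$ equals the number with $\epsilon(\chi)=-1$, so exactly $r/2$ of the $r$ real characters have indicator $+1$ — this proves the second assertion (and shows $r$ is even). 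For Conjecture~B, again set $\Gamma=\lambda\mapsto\lambda*\chi_0$. Since $E$ does not split over $D$ and $|E:D|=2$, every $e\in E\setminus D$ has $e^2\in D\setminus\{1\}$... but one needs the precise distribution. Here I would use \autoref{GowMurray}(ii) in its sharp form: $\sum_{\chi\in\Irr(B)}\epsilon(\chi)\chi(u)>0$ iff $u$ is $G$-conjugate (hence, $D$ abelian, $\N_G(D)$-conjugate) to some $e^2$. Expanding $\sum_\chi\epsilon(\chi)(\lambda*\chi_0)(u)$ against the decomposition-number description and using Theorem~A to identify which $\lambda*\chi_0$ are real, one relates $\epsilon(\lambda*\chi_0)$ to the Gow indicator $\frac1{|D|}\sum_{e\in E\setminus D}\lambda(e^2)$; the key identity, parallel to the split case, is $(de_0)^2=d\,d^t\cdot e_0^2$ for a fixed $e_0\in E\setminus D$ (using that $e_0^2$ is central in $E$ because $E/D$ has order $2$ and $D$ abelian forces $e_0^2\in\Z(E)$... this needs a short check), so $\sum_{e\in E\setminus D}\lambda(e^2)=\lambda(e_0^2)\sum_{d\in D}\lambda(d)\lambda^t(d)$, which is $\lambda(e_0^2)\cdot|D|$ when $\lambda^t=\lambda^{-1}$ and $0$ otherwise; then $\lambda(e_0^2)=\pm1$ and matching this sign against $\epsilon(\lambda*\chi_0)$ is the content.

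The main obstacle I expect is precisely the last matching of signs in the non-split case: knowing that half the real characters have indicator $+1$ does not by itself tell us \emph{which} $\lambda*\chi_0$ has indicator $+1$ versus $-1$, whereas Conjecture~B pins this down as $\operatorname{sgn}\lambda(e_0^2)$. To close this I would invoke \autoref{lembrauer} with the subsection $(u,b_u)$ for $u=e_0^2$ (or a well-chosen generator-type element), using $l(b_u)=1$ by nilpotency, to get $\sum_{\chi\in\Irr(B)}\epsilon(\chi)d^u_{\chi\phi}=\frac1{\phi(1)}\sum_{\psi\in\Irr(b_u)}\epsilon(\psi)\psi(u)$, and feed in $d^u_{\lambda*\chi_0,\phi}=\lambda(u)d^u_{\chi_0\phi}$ together with the known real/non-real pattern; comparing the resulting linear relations over all $u\in D$ (they determine $\epsilon$ on $\Irr(B)$ since the functions $u\mapsto\lambda(u)$ span) should force $\epsilon(\lambda*\chi_0)=\operatorname{sgn}\lambda(e_0^2)$ whenever $\lambda^t=\lambda^{-1}$. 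Finally, the assertion that Conjecture~B holds "also for all quasisimple groups" is not needed here; and the An--Eaton input is only cited in the introduction, so the theorem itself is self-contained modulo the results quoted above.
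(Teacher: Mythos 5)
Your handling of the indicator-distribution assertions is correct and matches the paper's: the split case via \autoref{cor}, the non-split case via \autoref{GowMurray}(ii) together with the observation that all irreducible characters of a nilpotent block with abelian defect have the same degree. Your direct verification of Conjecture~B in the split case, computing $\frac{1}{|D|}\sum_{d\in D}\lambda\bigl((de_0)^2\bigr)$ for an involution $e_0\in E\setminus D$ and recognizing it as $1$ or $0$ according to whether $\lambda^t=\overline\lambda$, is also valid and matches the known pattern of $\epsilon(\lambda*\chi_0)$.

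For Conjecture~B in the non-split case, however, your proposal has the gap you yourself flag, and the repair via \autoref{lembrauer} is not adequately justified: the right-hand side of that lemma involves F-S indicators of characters of the local block $b$, which are not known a priori; pinning those down would essentially require the Conjecture~C input that feeds \autoref{loc} and \autoref{thmBC}, defeating the purpose. The point you miss is that you are proving too much. Conjecture~B only asks for the \emph{existence} of a height-preserving bijection, not that the Broué--Puig map $\lambda\mapsto\lambda*\chi_0$ itself realizes the Gow indicator. Since $D$ is abelian, all characters of $D$ and of $B$ have height $0$, so any bijection is height-preserving, and it suffices to match the two multisets $\{\epsilon(\chi):\chi\in\Irr(B)\}$ and $\{\mu(\lambda):\lambda\in\Irr(D)\}$ where $\mu(\lambda):=\frac{1}{|D|}\sum_{e\in E\setminus D}\lambda(e^2)$. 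Your own computation shows $\mu(\lambda)=\lambda(e_0^2)\in\{\pm1\}$ when $\lambda^t=\overline\lambda$ and $\mu(\lambda)=0$ otherwise, giving the same number of zeros on each side; and $\sum_{\lambda\in\Irr(D)}\mu(\lambda)=|\{e\in E\setminus D:e^2=1\}|=0$ in the non-split case, so the non-zero values of $\mu$ split evenly between $\pm1$, exactly matching the distribution of $\epsilon$ you already established. That closes the gap without \autoref{lembrauer}.

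The paper itself closes the argument by a different route: having shown that the multiset of F-S indicators is determined by the defect pair alone (via Theorem~A for the count of reals and the split/non-split dichotomy for the signs), it replaces $B$ by the unique non-principal block of $Q\rtimes E$ with $Q\cong C_3$ and $\C_E(Q)=D$, for which Conjecture~B is known by Gow's Lemma~2.2 or by Theorem~E. Both the paper's reduction-to-a-model-group argument and the multiset count sketched above are legitimate; your write-up, as it stands, does not reach a proof because it commits to the Broué--Puig bijection and then lacks the tools to verify the sign.
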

\begin{proof}
If $E$ splits over $D$, then all real characters in $\Irr(B)$ have F-S indicator $1$ by \autoref{cor}. Otherwise we have $\sum_{\chi\in\Irr(B)}\epsilon(\chi)=0$ by \autoref{GowMurray}(ii), because all characters in $\Irr(B)$ have the same degree. Hence, exactly half of the real characters have F-S indicator $1$. Using Theorem~A we can determine the number of characters for each F-S indicator. 
For the last claim, we may therefore replace $B$ by the unique non-principal block of $G=Q\rtimes E$ where $Q\cong C_3$ and $\C_E(Q)=D$ (mentioned above). In this case Conjecture~B follows from Gow~\cite[Lemma~2.2]{Gow} or Theorem~E.
\end{proof}

\newlength{\myl}
\setlength{\myl}{3.6mm}
\begin{Ex}
Let $B$ be a real block with defect group $D\cong C_4\times C_2$. Then $B$ is nilpotent since $\Aut(D)$ is a $2$-group and $D$ is abelian. Moreover $|\Irr(B)|=8$. The F-S indicators depend not only on $E$, but also on the way $D$ embeds into $E$. The following cases can occur (here $M_{16}$ denotes the modular group and $[16,3]$ refers to the small group library):
\[\begin{array}{ll}
\text{F-S indicators}&E\\\hline
++++++++&D_8\times C_2\\
++++----&Q_8\times C_2,\ C_4\rtimes C_4\text{ with }\Phi(D)=E'\\
++++\parbox{\myl}{\centering$0$}\parbox{\myl}{\centering$0$}\parbox{\myl}{\centering$0$}\parbox{\myl}{\centering$0$}&D,\ D\times C_2,\ D_8*C_4,\ [16,3]\\
++--\parbox{\myl}{\centering$0$}\parbox{\myl}{\centering$0$}\parbox{\myl}{\centering$0$}\parbox{\myl}{\centering$0$}&C_4^2,\ C_8\times C_2,\ M_{16},\ C_4\rtimes C_4\text{ with }\Phi(D)\ne E'
\end{array}\]
\end{Ex}

The F-S indicator $\epsilon(\Phi)$ appearing in Conjecture~C has an interesting interpretation as follows. Let $\Omega:=\{g\in G:g^2=1\}$. The conjugation action of $G$ on $\Omega$ turns $F\Omega$ into an $FG$-module, called the \emph{involution module}.

\begin{Lem}[Murray]\label{Murray}
Let $B$ be a real $2$-block and $\phi\in\IBr(B)$. Then $\epsilon(\Phi_\phi)$ is the multiplicity of $\phi$ as a constituent of the Brauer character of $F\Omega$. 
\end{Lem}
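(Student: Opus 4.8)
The plan is to compute the Frobenius--Schur indicator $\epsilon(\Phi_\phi)$ by pairing the involution module against projective characters, using the classical description of $F$-$S$ indicators via the permutation module on involutions. Recall from \eqref{indformula} that $\sum_{\chi\in\Irr(G)}\epsilon(\chi)\chi(g)=|\{x\in G:x^2=g\}|$, which says precisely that the virtual ordinary character $\sum_\chi \epsilon(\chi)\chi$ is the permutation character of $G$ acting on $\Omega$. First I would restrict this identity to $2$-regular elements, so that it becomes an identity of Brauer characters: the Brauer character of $F\Omega$ equals $\sum_{\chi\in\Irr(G)}\epsilon(\chi)\widehat{\chi}$, where $\widehat{\chi}$ denotes restriction to $2$-regular classes. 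Writing $\widehat{\chi}=\sum_{\phi\in\IBr(G)}d_{\chi\phi}\phi$ and collecting terms, the multiplicity of a given $\phi\in\IBr(G)$ in the Brauer character of $F\Omega$ is $\sum_{\chi\in\Irr(G)}\epsilon(\chi)d_{\chi\phi}$.

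Next I would recognize this last sum as an F--S indicator of a projective character. Since $\Phi_\phi=\sum_{\chi\in\Irr(G)}d_{\chi\phi}\chi$ is the projective indecomposable character associated with $\phi$, linearity of $\epsilon$ (as defined right after Conjecture~C) gives $\epsilon(\Phi_\phi)=\sum_{\chi\in\Irr(G)}d_{\chi\phi}\epsilon(\chi)$. Combining the two displays yields that $\epsilon(\Phi_\phi)$ equals the multiplicity of $\phi$ in the Brauer character of $F\Omega$, which is the assertion. The block-theoretic refinement to $B$ comes for free: if $\phi\in\IBr(B)$, then $d_{\chi\phi}=0$ unless $\chi\in\Irr(B)$ (decomposition numbers respect blocks), so only $\chi\in\Irr(B)$ contribute to the sum, and the same argument applies verbatim with $\Irr(G)$ replaced by $\Irr(B)$.

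The one genuine subtlety is justifying that \eqref{indformula}, an identity of complex characters, may be reduced modulo $p=2$ to an identity of Brauer characters in the way I used. This is standard: the function $g\mapsto|\{x\in G:x^2=g\}|$ is a bona fide (non-negative integer combination) character of $G$, namely the permutation character of $\Omega$; its reduction mod $2$ is the Brauer character of the $FG$-permutation module $F\Omega$ restricted to $2$-regular classes; and reduction of the ordinary character $\sum_\chi\epsilon(\chi)\chi$ amounts to restricting each $\chi$ to $2$-regular elements, which is where the decomposition numbers enter. I would spell this out in one or two lines, citing the standard relation between ordinary and Brauer characters via decomposition numbers (e.g. \cite[Chapter~2]{Navarro}). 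No deeper input is needed, so I do not expect a real obstacle here — the lemma is essentially a bookkeeping consequence of \eqref{indformula} together with the definition of the involution module and of $\epsilon(\Phi_\phi)$.
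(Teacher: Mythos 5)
Your overall strategy — identify the multiplicity of $\phi$ in the Brauer character of $F\Omega$ with $\sum_{\chi\in\Irr(B)}\epsilon(\chi)d_{\chi\phi}$ and match it against $\epsilon(\Phi_\phi)=\sum_\chi d_{\chi\phi}\epsilon(\chi)$ — is the right one, and since the paper simply cites \cite[Lemma~2.6]{MurrayCyclic2} without proof there is nothing internal to compare against. However, your proposed justification of the first step contains a genuine error.

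You assert (twice) that the function $r(g):=|\{x\in G:x^2=g\}|$ is the ordinary permutation character of $G$ acting by conjugation on $\Omega$. This is false. The permutation character of $\Omega$ at $g$ is $|\{x\in\Omega:gxg^{-1}=x\}|=|\{x\in\C_G(g):x^2=1\}|$, which counts commuting involutions, not square roots of $g$. For $G=S_3$ and $g=(12)$ one gets $r(g)=0$ while the permutation character equals $2$; and $r=\sum_\chi\epsilon(\chi)\chi$ is only a virtual character, not a non-negative combination (already for $Q_8$ one $\epsilon$ is $-1$). So the identity of ordinary characters you appeal to does not exist, and the phrase ``reduction mod $2$'' of $r$ has no content here.

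What actually makes the argument work is a counting lemma valid only on $2$-regular classes: if $g$ has odd order $m$, every $x$ with $x^2=g$ centralises $g$, and the map
\[
\{x\in G:x^2=g\}\longrightarrow\{y\in\C_G(g):y^2=1\},\qquad x\longmapsto x\,g^{-(m+1)/2},
\]
is a bijection with inverse $y\mapsto y\,g^{(m+1)/2}$ (both computations use that $x$ and $y$ commute with $g$ and that $g^m=1$). Hence for $2$-regular $g$ one has $r(g)=|\{y\in\C_G(g):y^2=1\}|$, and the right-hand side is precisely the Brauer character of the permutation module $F\Omega$ evaluated at $g$. This is the missing step; it does not follow from \eqref{indformula} alone, nor from any generality about reducing characters mod $p$. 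Once it is inserted, the remainder of your proof — expanding $\widehat\chi=\sum_\phi d_{\chi\phi}\phi$, collecting the coefficient of $\phi$, invoking linearity of $\epsilon$, and using that $d_{\chi\phi}=0$ unless $\chi$ and $\phi$ lie in the same block — is correct and complete, and in fact proves the lemma for all $2$-blocks, real or not (for non-real $B$ both sides vanish).
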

\begin{proof}
See \cite[Lemma~2.6]{MurrayCyclic2}.
\end{proof}

Next we develop a local version of Conjecture~C. Let $B$ be a real $2$-block with defect pair $(D,E)$ and $B$-subsection $(u,b)$.
If $E=D\C_E(u)$, then $b$ is real and $(\C_D(u),\C_E(u))$ is a defect pair of $b$ by \cite[Lemma~2.6]{MurraySubpairs} applied to the subpair $(\langle u\rangle,b)$. Conversely, if $b$ is real, we may assume that $(\C_D(u),\C_E(u))$ is a defect pair of $b$ by \cite[Theorem~2.7]{MurraySubpairs}.
If $b$ is non-real, we may assume that $(\C_D(u),\C_D(u))=(\C_D(u),\C_E(u))$ is a defect pair of $b$.

\begin{Thm}\label{loc}
Let $B$ be $2$-block of a finite group $G$ with defect pair $(D,E)$. Suppose that Conjecture~C holds for all $2$-blocks of sections of $G$. 
Let $(u,b)$ be a $B$-subsection with defect pair $(\C_D(u),\C_E(u))$ such that $\IBr(b)=\{\phi\}$. Then
\[\sum_{\chi\in\Irr(B)}\epsilon(\chi)d^u_{\chi\phi}=\begin{cases}
|\{x\in D:x^2=u\}|&\text{if $B$ is the principal block},\\
|\{x\in E\setminus D:x^2=u\}|&\text{otherwise}.
\end{cases}\]
\end{Thm}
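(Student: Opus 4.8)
The plan is to reduce, via \autoref{lembrauer}, the left‑hand side to a ``Gow‑type'' twist of the (unique) projective indecomposable character $\Phi := \Phi_\phi$ of $b$, and then to realise that twisted indicator as a genuine Frobenius--Schur indicator of a projective indecomposable character of a block of a slightly larger group, to which Conjecture~C applies. Throughout write $H := \C_G(u)$, so $u \in \Z(H)$ and $l(b) = 1$. By \autoref{lembrauer},
\[
\sum_{\chi \in \Irr(B)} \epsilon(\chi)\, d^u_{\chi\phi} = \frac{1}{\phi(1)}\sum_{\psi \in \Irr(b)} \epsilon(\psi)\,\psi(u).
\]
Since $u$ is central in $H$ we have $\psi(u) = \omega_\psi(u)\,\psi(1)$ with $\omega_\psi(u) := \psi(u)/\psi(1)$ a root of unity, $\psi(1) = d_{\psi\phi}\,\phi(1)$, and $\omega_\psi(u)\,\epsilon(\psi) = |H|^{-1}\sum_{h \in H}\psi(uh^2)$ by \eqref{FS}; together with $\Phi = \sum_\psi d_{\psi\phi}\psi$ this rewrites the right‑hand side as $\nu := |H|^{-1}\sum_{h \in H}\Phi(uh^2)$.

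Two cases are immediate. If $b$ is non‑real, then $\epsilon(\psi) = 0$ for all $\psi \in \Irr(b)$, so $\nu = 0$; here $B$ is non‑principal and, by the conventions preceding the theorem, $\C_E(u) = \C_D(u)$, so the right‑hand side vanishes as well. If $B$ is the principal block, then so is $b$, and $l(b) = 1$ forces $H$ to have a normal $p$‑complement $N = \pcore_{p'}(H)$ (a well‑known theorem). Then $\Irr(b) = \Irr(H/N)$, which inflation identifies with $\Irr(\C_D(u))$ via $H/N \cong \C_D(u)$ (a defect group of $b$), preserving Frobenius--Schur indicators and values at $u$; since $\phi(1) = 1$ here, the displayed formula and \eqref{indformula} give $\nu = \sum_{\lambda \in \Irr(\C_D(u))}\epsilon(\lambda)\,\lambda(u) = |\{x \in \C_D(u) : x^2 = u\}| = |\{x \in D : x^2 = u\}|$, the last step because $x^2 = u$ forces $x \in \C_D(u)$.

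Now assume $b$ is real and non‑principal. Put $2^k := |u|$ and form the central extension $\hat H := (H \times \langle v\rangle)/\langle (u, v^{-2})\rangle$ with $\langle v\rangle \cong C_{2^{k+1}}$. Then $H$ embeds as a normal subgroup of index $2$, the image $v$ of $(1,v)$ is central in $\hat H$ with $v^2 = u$, and $\hat H/\langle v\rangle \cong H/\langle u\rangle$. Let $\hat b$ be the block of $\hat H$ corresponding to $b$ under the composition of the standard block bijections $\Bl(\hat H) \leftrightarrow \Bl(\hat H/\langle v\rangle) = \Bl(H/\langle u\rangle) \leftrightarrow \Bl(H)$ attached to the central $p$‑subgroups $\langle v\rangle$ and $\langle u\rangle$. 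These bijections preserve Brauer characters, principality, and (being compatible with complex conjugation) reality, so $\hat b$ is real and non‑principal with $\IBr(\hat b) = \{\phi\}$, and its unique simple module $\hat S$ restricts to $H$ as the simple module $S$ of $b$. Since $\hat H/H \cong C_2$ and $F$ is algebraically closed of characteristic $2$, $\hat S$ is the only simple $F\hat H$‑module lying over $S$, so $\operatorname{Ind}_H^{\hat H}$ of the projective cover of $S$ is projective with simple head $\hat S$, hence equals the projective cover of $\hat S$; thus the projective indecomposable character $\hat\Phi$ of $\hat b$ equals $\operatorname{Ind}_H^{\hat H}\Phi$, which (as $v$ is central) satisfies $\hat\Phi|_H = 2\Phi$ and vanishes off $H$. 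As $x^2 \in H$ for every $x \in \hat H$ (indeed $(hv)^2 = h^2 u$), splitting $\epsilon(\hat\Phi) = |\hat H|^{-1}\sum_{x \in \hat H}\hat\Phi(x^2)$ over the two cosets of $H$ yields $\epsilon(\hat\Phi) = \epsilon(\Phi) + \nu$.

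Finally, the defect data transport along the same central $p$‑extensions: using \autoref{GowMurray}(i) (or Murray's analysis of defect pairs under normal subgroups) one finds that a defect pair of $\hat b$ is $(\hat D, \hat E) := (\C_D(u)\langle v\rangle,\, \C_E(u)\langle v\rangle)$, with $[\hat E : \hat D] = [\C_E(u) : \C_D(u)] = 2$. Applying Conjecture~C to $b$ and to $\hat b$ gives $\epsilon(\Phi) = |\{x \in \C_E(u) \setminus \C_D(u) : x^2 = 1\}|$ and $\epsilon(\hat\Phi) = |\{x \in \hat E \setminus \hat D : x^2 = 1\}|$. But $\hat E \setminus \hat D$ is the disjoint union of $\C_E(u) \setminus \C_D(u)$ and $(\C_E(u) \setminus \C_D(u))v$, and for $y \in \C_E(u) \setminus \C_D(u)$ one has $(yv)^2 = y^2 u$, which equals $1$ iff $y^2 = u^{-1}$; since $y \mapsto y^{-1}$ permutes $\C_E(u) \setminus \C_D(u)$ and carries square roots of $u^{-1}$ to square roots of $u$, we get $\epsilon(\hat\Phi) = \epsilon(\Phi) + |\{y \in \C_E(u) \setminus \C_D(u) : y^2 = u\}|$, hence $\nu = |\{y \in \C_E(u) \setminus \C_D(u) : y^2 = u\}| = |\{x \in E \setminus D : x^2 = u\}|$, the last equality because $x^2 = u$ forces $x \in \C_G(u)$. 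The substantive point is this reduction $\nu = \epsilon(\hat\Phi) - \epsilon(\Phi)$ obtained by adjoining a square root of $u$; the main technical obstacle I anticipate is verifying that the full real‑block structure (real defect classes, extended centralisers, and the extended inertial group) really does transport through the central $p$‑extension $H \to H/\langle u\rangle \leftarrow \hat H$, so that the extended defect group of $\hat b$ is indeed $\C_E(u)\langle v\rangle$. (A minor point: $\hat H$ is not literally a section of $G$, so the hypothesis should be read as supplying Conjecture~C for the groups actually used — automatic under Conjecture~C — or one passes to a Morita‑equivalent block of a genuine section.)
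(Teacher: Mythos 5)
Your proof takes a genuinely different route in the main case. Both you and the paper use \autoref{lembrauer} and identical reductions for the easy cases, and both then need to evaluate a quantity of the form $\epsilon(\Phi_b)$ ``twisted by $u$''. But where the paper reduces to $u\in\Z(G)$ (replacing $G$ by $\C_G(u)$) and then passes \emph{downward} to the quotient $\overline{G}=G/\langle u\rangle$ --- obtaining the identity $\sum_{\chi}\epsilon(\chi)d^u_{\chi\phi}=2\epsilon(\Phi_{\hat{B}})-\epsilon(\Phi_B)$ from the fact that $d_{\chi\phi}+d^u_{\chi\phi}\in\{0,2d_{\chi\phi}\}$ for real $\chi$, with $\hat B$ a block of a section of $G$ --- you instead pass \emph{upward} to a central extension $\hat H$ of $\C_G(u)$ obtained by adjoining a square root of $u$, identify $\nu=\epsilon(\hat\Phi)-\epsilon(\Phi_b)$ via the formula $\hat\Phi=\operatorname{Ind}_H^{\hat H}\Phi$, and finish with a coset computation in $\hat E\setminus\hat D$. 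This cleanly bypasses the paper's final display of four character sums and the invocation of \cite[Lemma~4.7]{Isaacs}, and the Clifford-theoretic identification of $\hat\Phi$ as the projective indecomposable character of $\hat b$ is correct.

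However, there is a genuine gap that you flagged but did not resolve: $\hat H$ is not a section of $G$ (already for $G=H=C_2$ and $u$ of order $2$, your $\hat H\cong C_4$ is not a section of $C_2$). The hypothesis of the theorem supplies Conjecture~C only for $2$-blocks of sections of $G$, and this is not incidental --- in the proof of Theorem~E the hypothesis is verified precisely because all the sections that actually appear are solvable nilpotent-block situations again. Your suggested escape, ``pass to a Morita-equivalent block of a genuine section,'' does not work as stated, since the Frobenius--Schur indicator of the projective indecomposable character is not a Morita invariant, and Conjecture~C is specifically an assertion about that indicator together with the defect pair. (A secondary, fixable issue: you need to transport defect pairs \emph{up} through the central extension $\hat H\twoheadrightarrow\hat H/\langle v\rangle$ to get $(\hat D,\hat E)=(\C_D(u)\langle v\rangle,\C_E(u)\langle v\rangle)$; \cite[Lemma~1.7]{MurraySubpairs}, which the paper uses, only gives the downward direction, so you must argue via preimages and conjugacy that the upward direction also holds.) To make your argument compatible with the hypothesis as written, you would need to strengthen the hypothesis to ``Conjecture~C holds for all $2$-blocks of central $2$-extensions of sections of $G$,'' or else convert your central extension into the paper's quotient construction, at which point you recover essentially the paper's argument.
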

\begin{proof}
If $B$ is not real, then $B$ is non-principal and $E=D$. It follows that $\epsilon(\chi)=0$ for all $\chi\in\Irr(B)$ and 
\[|\{x\in E\setminus D:x^2=u\}|=0.\] 
Hence, we may assume that $B$ is real. 
By \autoref{lembrauer}, we have
\begin{equation}\label{brauer}
\sum_{\chi\in\Irr(B)}\epsilon(\chi)d^u_{\chi\phi}=\sum_{\psi\in\Irr(b)}\epsilon(\psi)d^u_{\psi\phi}=\frac{1}{\phi(1)}\sum_{\psi\in\Irr(b)}\epsilon(\psi)\psi(u).
\end{equation}
Suppose that $B$ is the principal block. Then $b$ is the principal block of $\C_G(u)$ by Brauer's third main theorem (see \cite[Theorem~6.7]{Navarro}). The hypothesis $l(b)=1$ implies that $\phi=1_{\C_G(u)}$ and $\C_G(u)$ has a normal $2$-complement $N$ (see \cite[Corollary~6.13]{Navarro}). It follows that $\Irr(b)=\Irr(\C_G(u)/N)=\Irr(\C_D(u))$ and 
\[\sum_{\psi\in\Irr(b)}\epsilon(\psi)d^u_{\psi\phi}=\sum_{\lambda\in\Irr(\C_D(u))}\epsilon(\lambda)\lambda(u)=|\{x\in\C_D(u):x^2=u\}|\]
by \eqref{indformula}. Since every $x\in D$ with $x^2=u$ lies in $\C_D(u)$, we are done in this case.

Now let $B$ be a non-principal real $2$-block. 
If $b$ is not real, then \eqref{brauer} shows that $\sum_{\chi\in\Irr(B)}\epsilon(\chi)d^u_{\chi\phi}=0$. 
On the other hand, we have $\C_E(u)=\C_D(u)\le D$ and $|\{x\in E\setminus D:x^2=u\}|=0$. Hence, we may assume that $b$ is real. 
Since every $x\in E$ with $x^2=u$ lies in $\C_E(u)$, we may assume that $u\in\Z(G)$ by \eqref{brauer}. 

Then $\chi(u)=d^u_{\chi\phi}\phi(1)$ for all $\chi\in\Irr(B)$. 
If $u^2\notin\Ker(\chi)$, then $\chi(u)\notin\RR$ and $\epsilon(\chi)=0$. Thus, it suffices to sum over $\chi$ with $d^u_{\chi\phi}=\pm d_{\chi\phi}$.
Let $Z:=\langle u\rangle\le\Z(G)$ and $\overline{G}:=G/Z$. Let $\hat{B}$ be the unique (real) block of $\overline{G}$ dominated by $B$. By \cite[Lemma~1.7]{MurraySubpairs}, $(\overline{D},\overline{E})$ is a defect pair for $\hat{B}$. 
Then, using \cite[Lemma~4.7]{Isaacs} and Conjecture~C for $B$ and $\hat{B}$, we obtain
\begin{align*}
\sum_{\chi\in\Irr(B)}\epsilon(\chi)d^u_{\chi\phi}&=\sum_{\chi\in\Irr(B)}\epsilon(\chi)(d_{\chi\phi}+d^u_{\chi\phi})-\sum_{\chi\in\Irr(B)}\epsilon(\chi)d_{\chi\phi}\\
&=2\sum_{\chi\in\Irr(\hat{B})}\epsilon(\chi)d_{\chi\phi}-\sum_{\chi\in\Irr(B)}\epsilon(\chi)d_{\chi\phi}\\
&=2|\{\overline{x}\in\overline{E}\setminus\overline{D}:\overline{x}^2=1\}|-|\{x\in E\setminus D:x^2=1\}|\\
&=\sum_{\lambda\in\Irr(E)}\epsilon(\lambda)(\lambda(1)+\lambda(u))-\sum_{\lambda\in\Irr(D)}\epsilon(\lambda)(\lambda(1)+\lambda(u))\\
&\quad -\sum_{\lambda\in\Irr(E)}\epsilon(\lambda)\lambda(1)+\sum_{\lambda\in\Irr(D)}\epsilon(\lambda)\lambda(1)\\
&=\sum_{\lambda\in\Irr(E)}\epsilon(\lambda)\lambda(u)-\sum_{\lambda\in\Irr(D)}\epsilon(\lambda)\lambda(u)=|\{x\in E\setminus D:x^2=u\}|.\qedhere
\end{align*}
\end{proof}

\section{Theorems~D and E}\label{secB}

The following result implies Theorem~D.

\begin{Thm}\label{thmBC}
Suppose that $B$ is a real, nilpotent, non-principal $2$-block fulfilling the statement of \autoref{loc}. Then Conjecture~B holds for $B$.
\end{Thm}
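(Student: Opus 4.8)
The plan is to combine the Broué--Puig parametrization of $\Irr(B)$ with the local formula furnished by the hypothesis, and then to \emph{correct} that parametrization by a linear character of $D$, so that the Frobenius--Schur indicators match the Gow indicators appearing in \eqref{GowInd}.

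First I would fix a $2$-rational real character $\chi_0\in\Irr(B)$ of height $0$ (which exists by \cite[Theorem~5.1]{Gow}). By Broué--Puig~\cite{BrouePuig}, $\Irr(B)=\{\lambda*\chi_0:\lambda\in\Irr(D)\}$ is then a height-preserving parametrization, and $d^u_{\lambda*\chi_0,\phi_u}=\lambda(u)\,d^u_{\chi_0,\phi_u}$ for every $B$-subsection $(u,b_u)\le(D,b_D)$, where $\IBr(b_u)=\{\phi_u\}$ by nilpotency. As in the proof of Theorem~A, $\eta(u):=d^u_{\chi_0,\phi_u}\in\ZZ$ because $\chi_0$ is $2$-rational. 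Moreover $\eta(u)\in\{\pm1\}$: since $b_u$ is nilpotent its Cartan matrix equals $(|\C_D(u)|)$, so the orthogonality relations for generalized decomposition numbers give
\[|\C_D(u)|=\sum_{\chi\in\Irr(B)}|d^u_{\chi,\phi_u}|^2=|\eta(u)|^2\sum_{\lambda\in\Irr(D)}|\lambda(u)|^2=|\eta(u)|^2\,|\C_D(u)|,\]
forcing $|\eta(u)|=1$; together with $\eta(u)\in\ZZ$ and $\eta(1)=1$ this gives the claim.

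Next I would invoke the hypothesis. For each $u\in D$ I choose, as in the discussion preceding \autoref{loc}, the Brauer correspondent $b_u$ so that the subsection $(u,b_u)\le(D,b_D)$ has defect pair $(\C_D(u),\C_E(u))$; since $B$ is non-principal, the statement of \autoref{loc} then yields, for all $u\in D$,
\[\eta(u)\sum_{\lambda\in\Irr(D)}\epsilon(\lambda*\chi_0)\,\lambda(u)=\sum_{\chi\in\Irr(B)}\epsilon(\chi)\,d^u_{\chi,\phi_u}=n(u),\qquad n(u):=|\{x\in E\setminus D:x^2=u\}|.\]
On the Gow side, $x\mapsto x^{-1}$ maps $E\setminus D$ onto itself, so $n(u)=n(u^{-1})$, and hence
\[\langle n,\lambda\rangle_D=\frac1{|D|}\sum_{u\in D}n(u)\lambda(u)=\frac1{|D|}\sum_{e\in E\setminus D}\lambda(e^2)=:\gamma(\lambda)\]
for every $\lambda\in\Irr(D)$, i.e.\ $n=\sum_{\lambda\in\Irr(D)}\gamma(\lambda)\lambda$, where $\gamma(\lambda)$ is the Gow indicator in \eqref{GowInd} (cf.\ \cite[Lemma~2.1]{Gow}). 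Writing $g:=\sum_\lambda\epsilon(\lambda*\chi_0)\lambda$, we have thus obtained the identity $\eta(u)\,g(u)=\sum_{\lambda\in\Irr(D)}\gamma(\lambda)\lambda(u)$ for all $u\in D$.

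The heart of the matter is the claim that $\eta\colon D\to\{\pm1\}$ is a \emph{linear character} of $D$, and this is the step I expect to be the main obstacle. Granting it, $\mu\mapsto\eta\mu$ is a degree-preserving permutation of $\Irr(D)$, so $\Gamma(\mu):=(\eta\mu)*\chi_0$ is a height-preserving bijection $\Irr(D)\to\Irr(B)$; and rewriting the identity above as
\[\sum_\mu\gamma(\mu)\mu(u)=\eta(u)g(u)=\sum_\lambda\epsilon(\lambda*\chi_0)(\eta\lambda)(u)=\sum_\mu\epsilon\bigl((\eta\mu)*\chi_0\bigr)\mu(u),\]
the linear independence of $\Irr(D)$ gives $\epsilon(\Gamma(\mu))=\gamma(\mu)=\frac1{|D|}\sum_{e\in E\setminus D}\mu(e^2)$, which is exactly \eqref{GowInd}; so Conjecture~B holds for $B$. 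To settle the remaining claim I would use the Broué--Puig isotypy between $B$ and $\mathcal O D$: for $\mathcal O D$ itself the analogous signs are trivial (there $d^u_{\lambda,1_{\C_D(u)}}=\lambda(u)$, since $\C_D(u)$ is a $2$-group), and the signs relating the local perfect isometries of the isotypy to these obvious ones at the various subsections should patch together to a single linear character $\nu$ of $D$; transporting the trivial character along the isotypy and twisting by a linear character of order $\le 2$ to restore $2$-rationality, one recovers a valid $\chi_0$ whose associated $\eta$ is linear. A more self-contained route would be to deduce multiplicativity of $\eta$ directly from $\eta g=n$ together with the symmetries $\epsilon(\overline\lambda*\chi_0)=\epsilon(\lambda*\chi_0)$ and $\gamma(\overline\lambda)=\gamma(\lambda)$, but showing that $\eta$ is a genuine character in this way looks delicate.
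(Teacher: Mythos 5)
Your setup is correct and matches the paper's up to the point where you obtain the relation $\eta(u)\,g(u)=\sum_{\lambda}\gamma(\lambda)\lambda(u)$; the computation $|\eta(u)|=1$ via the Cartan number $|\C_D(u)|$ of the nilpotent local block is also fine. But the proof then stalls precisely where you flag trouble: you need the sign function $\eta\colon D\to\{\pm1\}$ to be a linear character of $D$, and you do not prove this. I do not see a short route to it (neither the isotypy sketch nor the ``symmetries of $g$ and $\gamma$'' sketch is carried out), and the paper does not prove it either. This is a genuine gap in the argument as written.

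The paper side-steps the multiplicativity issue entirely. Setting $\sigma_i:=\eta(u_i)$ and $w_i:=|\{x\in E\setminus D:x^2=u_i\}|$, the local formula says $vQ=w$ for the generalized decomposition matrix $Q=(\lambda(u_i)\sigma_i)$ and $v=(\epsilon(\lambda*\chi_0))_\lambda$. Orthogonality gives $Q^{-1}=\diag(|\C_D(u_i)|)^{-1}\overline{Q}^{\TT}$, and because $v$ and $w$ are real, $v=w\diag(|\C_D(u_i)|)^{-1}Q^{\TT}$. Extracting the coordinate indexed by $\lambda=1_D$ and using the normalization $\epsilon(\chi_0)=1$ (Gow) together with $\sum_i w_i|D:\C_D(u_i)|=|E\setminus D|=|D|$, one finds
\[1=\epsilon(\chi_0)=\sum_{i}\frac{w_i\sigma_i}{|\C_D(u_i)|}\le\sum_{i}\frac{w_i}{|\C_D(u_i)|}=1,\]
forcing $\sigma_i=1$ whenever $w_i\ne 0$. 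Consequently the signs $\sigma_i$ are invisible in the linear system $xQ=w$ (the columns where $\sigma_i\ne 1$ carry right-hand side $0$), so one may simply replace $Q$ by the character table of $D$ and check that $(\gamma(\lambda))_\lambda$ solves the same full-rank system, with $\Gamma$ the plain Broué--Puig bijection and no twist by $\eta$ at all. In short: you reduced to an unproved structural claim about $\eta$, while the paper uses the normalization $\epsilon(\chi_0)=1$ and a positivity argument to show that claim is unnecessary. To repair your proof you would either have to establish that $\eta$ is a linear character (nontrivial, and possibly false in general), or replace that step by the paper's averaging argument.
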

\begin{proof}
Let $(D,E)$ be defect pair of $B$. By Gow~\cite[Theorem~5.1]{Gow}, there exists a $2$-rational character $\chi_0\in\Irr(B)$ of height $0$ and $\epsilon(\chi_0)=1$. Let 
\[\Gamma:\Irr(D)\to\Irr(B),\qquad\lambda\mapsto\lambda*\chi_0\] 
be the Broué--Puig bijection.
Let $(u_1,b_1),\ldots,(u_k,b_k)$ be representatives for the conjugacy classes of $B$-subsections. Since $B$ is nilpotent, we may assume that $u_1,\ldots,u_k\in D$ represent the conjugacy classes of $D$. Let $\IBr(b_i)=\{\phi_i\}$ for $i=1,\ldots,k$.
Since $\chi_0$ is $2$-rational, we have $\sigma_i:=d^u_{\chi_0,\phi_i}\in\{\pm1\}$ for $i=1,\ldots,k$. 
Hence, the generalized decomposition matrix of $B$ has the form 
\[Q=(\lambda(u_i)\sigma_i:\lambda\in\Irr(D),i=1,\ldots,k)\]
(see \cite[Section~8.10]{LinckelmannBook2}). 
Let $v:=(\epsilon(\Gamma(\lambda)):\lambda\in\Irr(D))$ and $w:=(w_1,\ldots,w_k)$ where $w_i:=|\{x\in E\setminus D:x^2=u_i\}|$. Then \autoref{loc} reads as $vQ=w$. 

Let $d_i:=|\C_D(u_i)|$ and $d=(d_1,\ldots,d_k)$. Then the second orthogonality relation yields $Q^\TT\overline{Q}=\diag(d)$ where $Q^\TT$ denotes the transpose of $Q$. It follows that $Q^{-1}=\diag(d)^{-1}\overline{Q}^\TT$ and \[v=w\diag(d)^{-1}\overline{Q}^\TT=w\diag(d)^{-1}Q^\TT,\] 
because $\overline{v}=v$. Since $w_i=|\{x\in E\setminus D:x^2=u_i^y\}|$ for every $y\in D$, we obtain $\sum_{i=1}^kw_i|D:\C_D(u_i)|=|E\setminus D|=|D|$.
In particular,
\[1=\epsilon(\chi_0)=\sum_{i=1}^k\frac{w_i\sigma_i}{|\C_D(u_i)|}\le \sum_{i=1}^k\frac{w_i|\sigma_i|}{|\C_D(u_i)|}=1.\]
Therefore, $\sigma_i=1$ or $w_i=0$ for each $i$. This means that the signs $\sigma_i$ have no impact on the solution of the linear system $xQ=w$. Hence, we may assume that $Q=(\lambda(u_i))$ is just the character table of $D$. Since $Q$ has full rank, $v$ is the only solution of $xQ=w$. 
Setting $\mu(\lambda):=\frac{1}{|D|}\sum_{e\in E\setminus D}\lambda(e^2)$, it suffices to show that $(\mu(\lambda):\lambda\in\Irr(D))$ is another solution of $xQ=w$. Indeed,
\begin{align*}
\sum_{\lambda\in\Irr(D)}\frac{\lambda(u_i)}{|D|}\sum_{e\in E\setminus D}\lambda(e^2)&=\frac{1}{|D|}\sum_{e\in E\setminus D}\sum_{\lambda\in\Irr(D)}\lambda(u_i)\lambda(e^2)\\
&=\frac{1}{|D|}\sum_{\substack{e\in E\setminus D\\e^2=u_i^{-1}}}|D:\C_D(u_i)||\C_D(u_i)|=w_i
\end{align*}
for $i=1,\ldots,k$. 
\end{proof}

\begin{ThmE}
Conjectures B and C hold for all nilpotent $2$-blocks of solvable groups. 
\end{ThmE}
\begin{proof}
Let $B$ be a real, nilpotent, non-principal $2$-block of a solvable group $G$ with defect pair $(D,E)$. We first prove Conjecture~C for $B$. Since all sections of $G$ are solvable and all blocks dominated by $B$-subsections are nilpotent, Conjecture~C holds for those blocks as well. Hence, the hypothesis of \autoref{loc} is fulfilled for $B$. Now by \autoref{thmBC}, Conjecture~B holds for $B$. 

Let $N:=\pcore_{2'}(G)$ and let $\theta\in\Irr(N)$ such that the block $\{\theta\}$ is covered by $B$. 
Since $B$ is non-principal, $\theta\ne 1_N$ and therefore $\overline{\theta}\ne\theta$ as $N$ has odd order.
Since $B$ also lies over $\overline{\theta}$, it follow that $G_\theta<G$. Let $b$ be the Fong--Reynolds correspondent of $B$ in the extended stabilizer $G_\theta^*$.
By \cite[Theorem~9.14]{Navarro} and \cite[p. 94]{MurrayCyclicOdd}, the Clifford correspondence $\Irr(b)\to\Irr(B)$, $\psi\mapsto\psi^G$ preserves decomposition numbers and F-S indicators. Thus, we need to show that $b$ has defect pair $(D,E)$. Let $\beta$ be the Fong--Reynolds correspondent of $B$ in $G_\theta$. By \cite[Theorem~10.20]{Navarro}, $\beta$ is the unique block over $\theta$. In particular, the block idempotents $1_\beta=1_\theta$ are the same (we identify $\theta$ with the block $\{\theta\}$). Since $b$ is also the unique block of $G_\theta^*$ over $\theta$, we have $1_b=1_\theta+1_{\overline{\theta}}=\sum_{x\in N}\alpha_xx$ for some $\alpha_x\in F$. Let $S$ be a set of representatives for the cosets $G/G_\theta^*$. Then
\[1_B=\sum_{s\in S}(1_\theta+1_{\overline{\theta}})^s=\sum_{s\in S}1_b^s=\sum_{g\in N}\Bigl(\sum_{s\in S}\alpha_{g^{s^{-1}}}\Bigr)g.\]
Hence, there exists a real defect class $K$ of $B$ such that $\alpha_{g^{s^{-1}}}\ne 0$ for some $g\in K$ and $s\in S$. Of course we can assume that $g=g^{s^{-1}}$. Then $1_b$ does not vanish on $g$. By \cite[Theorem~9.1]{Navarro}, the central characters $\lambda_B$, $\lambda_b$ and $\lambda_\theta$ agree on $N$. It follows that $K$ is also a real defect class of $b$. Hence, we may assume that $(D,E)$ is a defect pair of $b$. 

It remains to consider $G=G_\theta^*$ and $B=b$. Then $D$ is a Sylow $2$-subgroup of $G_\theta$ by \cite[Theorem~10.20]{Navarro} and $E$ is a Sylow $2$-subgroup of $G$. 
Since $|G:G_\theta|=2$, it follows that $G_\theta\unlhd G$ and $N=\pcore_{2'}(G_\theta)$. By \cite[Lemma~1 and 2]{NavarroNil}, $\beta$ is nilpotent and $G_\theta$ is $2$-nilpotent, i.\,e. $G_\theta=N\rtimes D$ and $G=N\rtimes E$. 
Let $\widetilde{\Phi}:=\sum_{\chi\in\Irr(B)}\chi(1)\chi=\phi(1)\Phi$ where $\IBr(B)=\{\phi\}$. 
We need to show that
\[\epsilon(\widetilde\Phi)=\phi(1)|\{x\in E\setminus D:x^2=1\}|.\]
Note that $\chi_N=\frac{\chi(1)}{2\theta(1)}(\theta+\overline{\theta})$. 
By Frobenius reciprocity, it follows that $\widetilde\Phi=2\theta(1)\theta^G$ and 
\[\widetilde\Phi_N=|G:N|\theta(1)(\theta+\overline{\theta}).\] 
Since $\Phi$ vanishes on elements of even order, $\widetilde\Phi$ vanishes outside $N$. Since $\widetilde{\Phi}_{G_\theta}$ is a sum of non-real characters in $\beta$, we have
\[\epsilon(\widetilde\Phi)=\frac{1}{|G|}\sum_{g\in G_\theta}\widetilde\Phi(g^2)+\frac{1}{|G|}\sum_{g\in G\setminus G_\theta}\widetilde\Phi(g^2)=\frac{1}{|G|}\sum_{g\in G\setminus G_\theta}\widetilde\Phi(g^2).\]
Every $g\in G\setminus G_\theta=NE\setminus ND$ with $g^2\in N$ is $N$-conjugate to a unique element of the form $xy$ where $x\in E\setminus D$ is an involution and $y\in\C_N(x)$ (Sylow's theorem). Setting $\Delta:=\{x\in E\setminus D:x^2=1\}$, we obtain
\begin{equation}\label{tildePhi}
\epsilon(\widetilde\Phi)=\frac{\theta(1)}{|N|}\sum_{x\in\Delta}|N:\C_N(x)|\sum_{y\in\C_N(x)}(\theta(y)+\overline{\theta(y)})=2\theta(1)\sum_{x\in\Delta}\frac{1}{|\C_N(x)|}\sum_{y\in\C_N(x)}\theta(y).
\end{equation}
For $x\in\Delta$ let $H_x:=N\langle x\rangle$. Again by Sylow's theorem, the $N$-orbit of $x$ is the set of involutions in $H_x$. 
From $\theta^x=\overline{\theta}$ we see that $\theta^{H_x}$ is an irreducible character of $2$-defect $0$. By \cite[Theorem~5.1]{Gow}, we have $\epsilon(\theta^{H_x})=1$. Now applying the same argument as before, it follows that
\[1=\epsilon(\theta^{H_x})=\frac{1}{|N|}\sum_{g\in H_x\setminus N}\theta^{H_x}(g^2)=\frac{2}{|\C_N(x)|}\sum_{y\in\C_N(x)}\theta(y).\]
Combined with \eqref{tildePhi}, this yields $\epsilon(\widetilde\Phi)=2\theta(1)|\Delta|$.
By Green's theorem (see \cite[Theorem~8.11]{Navarro}), $\phi_N=\theta+\overline{\theta}$ and $\epsilon(\widetilde{\Phi})=\phi(1)|\Delta|$ as desired.
\end{proof}

For non-principal blocks $B$ of solvable groups with $l(B)=1$ it is not true in general that $G_\theta$ is $2$-nilpotent in the situation of Theorem~E. For example, a (non-real) $2$-block of a triple cover of $A_4\times A_4$ has a unique simple module. Extending this group by an automorphism of order $2$, we obtain the group $G=\mathtt{SmallGroup}(864,3988)$, which fulfills the assumptions with $D\cong C_2^4$, $N\cong C_3$ and $|G:NE|=9$. 

In order to prove Conjecture~C for arbitrary $2$-blocks of solvable groups, we may follow the steps in the proof above and invoke a result on fully ramified Brauer characters \cite[Theorem~2.1]{NavarroFully}.
The claim then boils down to a purely group-theoretical statement:
Let $B$ be a real, non-principal $2$-block of a solvable group $G$ with defect pair $(D,E)$ and $l(B)=1$. Let $\overline{G}:=G/\pcore_{2'}(G)$. Then
\[|\{\overline{x}\in \overline{G}\setminus\overline{G_\theta}:\overline{x}^2=1\}|=|\{x\in E\setminus D:x^2=1\}|\sqrt{|G:EN|}.\]
Unfortunately, I am unable to prove this.

\section*{Acknowledgment}
I thank Gabriel Navarro for providing some arguments for Theorem~E from his paper~\cite{NavarroNil}. John Murray and three anonymous referees have made many valuable comments, which improved the quality of the manuscript. 
The work is supported by the German Research Foundation (\mbox{SA 2864/3-1} and \mbox{SA 2864/4-1}).

\end{document}